\title[]{On the Potential Function of the Colored Jones Polynomial with Arbitrary Colors}
\author[]{Shun Sawabe}
\address{Department of Pure and Applied Mathematics, School of Fundamental Science and Engineering, Waseda University, 3-4-1 Okubo, Shinjuku, Tokyo 169-8555, Japan}
\email{sa-shun.1729ttw@asagi.waseda.jp}
\subjclass[2020]{57K14, 57K31, 57K32}
\keywords{the Chen-Yang conjecture, cone-manifold, potential function, the volume conjecture.}
\date{}
\newtheorem{thm}{Theorem}[section]
\newtheorem{defi}[thm]{Definition}%[section]
\newtheorem{prop}[thm]{Proposition}%[section]
\newtheorem{conj}[thm]{Conjecture}
\theoremstyle{definition}
\newtheorem{rem}[thm]{Remark}
\newtheorem{ex}[thm]{Example}
\newcommand{\iu}{\sqrt{-1}}
\newcommand{\A}{\mathcal{A}}
\newcommand{\U}{\mathcal{U}}
\newcommand{\R}{\mathcal{R}}
\newcommand{\tb}{\overline{t}}
\DeclareMathOperator{\vol}{Vol}
\DeclareMathOperator{\li}{Li_2}
\DeclareMathOperator{\im}{Im}
\DeclareMathOperator{\arccosh}{arccosh}
\DeclareMathOperator{\lk}{lk}
\begin{document}
\begin{abstract}
	We consider the potential function of the colored Jones polynomial for a link with arbitrary colors
	and obtain the cone-manifold structure for the link complement. In addition, we
	establish a relationship between a saddle point equation and hyperbolicity of the link complement.
	This provides evidence for the Chen-Yang conjecture on the link complement.
\end{abstract}
\maketitle
\section{Introduction}
The volume conjecture is one of the most important problems in low-dimensional topology. 
Kashaev \cite{Ka} discovered that a certain limit of the Kashaev invariant of specific hyperbolic knots
such as the figure-eight knot is equal to the hyperbolic volume of their complements.
Murakami and Murakami \cite{MM} proved that the Kashaev invariant is a specialization of the colored Jones polynomial and conjectured
that a similar limit of the colored Jones polynomial for an arbitrary knot is equal to the simplicial volume of its complement.
In addition, Chen and Yang \cite{CY} considered the volume conjectures for 3-manifold invariants such as the Reshetikhin-Turaev invariant
and the Turaev-Viro invariant, and provided numerical evidence for them for specific 3-manifolds.
Detcherry, Kalfagianni, and Yang \cite{DKY} showed the relationship between the colored Jones polynomial for a link and the Turaev-Viro invariant of its
complement. By using this relation, they mathematically verified the Chen-Yang conjecture for complements of the figure-eight knot and Borromean rings.
In addition, Belletti, Detcherry, Kalfagianni, and Yang verified the Chen-Yang conjecture for fundamental shadow links in \cite{BDKY}.\par
Meanwhile, theoretical evidence of the original volume conjecture has been considered.
Kashaev and Tirkkonen \cite{KT} proved the volume conjecture for torus knots.
On the other hand, Yokota \cite{Yo} found a correspondence between quantum factorials in the Kashaev invariant and an ideal triangulation of a hyperbolic knot complement.
He showed that a saddle point equation for the potential function (see Section \ref{sec:pf} for the definition) of the invariant is equivalent to a hyperbolicity equation.
Also, the potential function of the colored Jones polynomial $J_N(K;e^{\frac{2 \pi \iu}{N}})$ for a hyperbolic knot $K$ is considered in \cite{Ch,Ch2,CM}.\par
In this study, we consider the potential function of
the colored Jones polynomial for a link $L$ with arbitrary colors.
We establish a relationship between a saddle point equation and a hyperbolicity equation of the link complement.
More precisely,
for a fixed diagram $D$ of the link $L$, we introduce a potential function
$\Phi_D(a_1,\ldots,a_n,w_1,\ldots,w_\nu)$ of the colored Jones polynomial $J_{\boldsymbol{a}(N)}(L;e^{\frac{2 \pi \iu}{N}})$
with parameters corresponding to the colors $\boldsymbol{a}(N)$ of link components.
We construct the potential function $\Phi_D$ by approximating the quantum $R$-matrix by continuous functions.
When we fix the parameters $ \boldsymbol{a}=(a_1,\ldots,a_n)$,
the saddle point $(\sigma_1(\boldsymbol{a}),\ldots,\sigma_\nu(\boldsymbol{a}))$
of $\Phi_D(\boldsymbol{a},-)$ gives a noncomplete hyperbolic structure to the link complement.
In fact, the manifold $M_{a_1,\ldots,a_n}$ with the hyperbolic structure is a cone-manifold.
Specifically, we prove the following statement:
\newtheorem*{mthm1}{\textrm \textbf Theorem~\ref{thm:m1}}
\begin{mthm1}
	The hyperbolic volume of the cone-manifold $M_{a_1,\ldots,a_n}$ is equal to the imaginary part of %"the value" is deleted
	\[
		\tilde{\Phi}_D = \Phi _D - \sum^{\nu}_{j=1}w_j \frac{\partial \Phi _D}{\partial w_j} \log w_j
	\]
	evaluated at $w_j = \sigma _j(\boldsymbol{a})$ for $j=1,\ldots,\nu $.
\end{mthm1}
Here, the function $ \Phi_D(\boldsymbol{a},\sigma_1(\boldsymbol{a}),\ldots,\sigma_\nu(\boldsymbol{a}))$
determines the Neumann-Zagier potential function \cite{NZ}.
Furthermore, we prove that the derivatives of the potential function
with respect to the new parameters correspond to the completeness of the hyperbolic structure of the link complement.
Note that similar arguments for the Kashaev invariant of the $5_2$ knot are indicated in \cite{Y2}.
As an application,
we prove the following theorem:
\newtheorem*{mthm2}{\textrm \textbf Theorem~\ref{thm:m2}}
\begin{mthm2}
	Let $D$ be a diagram of a hyperbolic link with $n$ components, and let $\boldsymbol{1}$ be $(1,\ldots,1) \in \mathbb{Z}^n$.
	The point $(\boldsymbol{1},\sigma_1(\boldsymbol{1}),\ldots,\sigma_\nu (\boldsymbol{1}))$ is a saddle point of the function
	$\Phi_D (a_1,\ldots,a_n,w_1,\ldots,w_\nu)$ and gives a complete hyperbolic structure to the link complement.
\end{mthm2}
The paper is organized as follows: In section \ref{sec:prelim}, we recall the facts on the colored Jones polynomial and the Turaev-Viro invariant.
In section \ref{sec:pf}, we give the potential function of the colored Jones polynomial.
In section \ref{sec:nchypstr}, we consider the case where the new parameters are fixed and prove Theorem \ref{thm:m1}.
In section \ref{sec:compness}, we regard the new parameters as variables and prove Theorem \ref{thm:m2}.
In section \ref{sec:wrt}, we briefly mention the Witten-Reshetikhin-Turaev invariant.
\par
\noindent \textit{Acknowledgments.} The author is grateful to Jun Murakami for his helpful comments.

\section{Preliminaries} \label{sec:prelim}
In this section, we review some facts on the invariants for a link and a $3$-manifold.
\subsection{The colored Jones polynomial and the Turaev-Viro invariant}
Let $L$ be an oriented $n$-component link, let $\boldsymbol{i}$ be a multiinteger, and let $t$ be an indeterminate. 
The colored Jones polynomial $J_{\boldsymbol{i}}(L;t)$ is defined skein-theoretically by using the Kauffman bracket,
which is a map $ \langle \cdot \rangle$ from the set of all unoriented diagrams of links to 
the ring of Laurent polynomials $\mathbb{Z}[A,A^{-1}]$ in an indeterminate $A$ given by the following axioms:
\begin{enumerate}
	\item For the trivial diagram $ \bigcirc $,
	\[
		\langle \bigcirc \rangle = 1.
	\]
	\item For an unoriented diagram $D$ with the trivial component added,
	\[
		\langle D \sqcup \bigcirc \rangle = (-A^2-A^{-2}) \langle D \rangle.
	\]
	\item For each crossing,
	\begin{center}
		\includegraphics[scale=1.0]{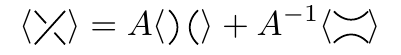}
	\end{center}
\end{enumerate}
Let $D_0$ be an unoriented diagram of the link $L$. The colored Jones polynomial $J_{\boldsymbol{i}}(L;t)$ for the link $L$
is a certain normalization of the Kauffman bracket of the
parallelized diagram of $D_0$ in which the Jones-Wenzl idempotent is inserted, where $t=A^{-4}$ \cite{DKY}.
\begin{rem}
	In this paper, we normalize the colored Jones polynomial so that the one for the trivial knot is equal to $1$.
\end{rem}
From the perspective of skein theory, we can define
the 3-manifold invariants such as the Reshetikhin-Turaev invariant or the Turaev-Viro invariant.
Detcherry, Kalfagianni, and Yang \cite{DKY} presented the relationship between the Turaev-Viro invariant for the link complement and the colored Jones polynomial.
\begin{thm}[Detcherry et al. \cite{DKY}] \label{thm:TVvsJ}
	Let $L \subset S^3$ be a link with $n$ components and $\tb=q^2$. Namely, $\tb=q^2=A^4$.
	\begin{enumerate}
		\item For an integer $r \geq 3$ and a primitive $4r$-th root of unity $A$,
		\[
			TV_r(S^3 \setminus L,q)= \eta _r^2 \sum _{1 \leq \boldsymbol{i}\leq r-1}|J'_{\boldsymbol{i}}(L;\tb)|^2.
		\]
		\item For an odd integer $r=2m+1 \geq 3$ and a primitive $2r$-th root of unity $A$,
		\[
			TV_r(S^3 \setminus L,q)= 2^{n-1}(\eta'_r)^2 \sum _{1 \leq \boldsymbol{i}\leq m}|J'_{\boldsymbol{i}}(L;\tb)|^2.
		\]
	\end{enumerate}
	Here, $ \eta _r$ and $ \eta'_r$ are
	\[
		\eta _r = \frac{A^2-A^{-2}}{\sqrt{-2r}},\quad \text{and} \quad \eta'_r = \frac{A^2-A^{-2}}{\sqrt{-r}}.
	\]
	In addition, for a multiinteger $\boldsymbol{i}=(i_1,\ldots,i_n)$,
	we let $ 1 \leq \boldsymbol{i}\leq m $ denotes that $1 \leq i_k \leq m $ for all integers $k = 1,\ldots,n$.
\end{thm}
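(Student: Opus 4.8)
The plan is to deduce both identities from the fundamental relationship between the Turaev--Viro and Reshetikhin--Turaev invariants. Recall that for a compact oriented $3$-manifold $M$ with boundary, the Turaev--Viro invariant equals the squared norm of the Reshetikhin--Turaev vector, $TV_r(M) = \|RT_r(M)\|^2$, where $RT_r(M)$ lives in the TQFT vector space $V_r(\partial M)$ equipped with its natural Hermitian pairing; this is the boundary version of the Roberts--Turaev--Walker theorem $TV_r = |RT_r|^2$ for closed manifolds. I would take this identity as the starting point and specialize it to $M = S^3 \setminus L$, so that the problem reduces to computing a single norm in a concretely understood vector space.

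Since $\partial M$ is a disjoint union of $n$ tori, one has $V_r(\partial M) \cong V_r(T^2)^{\otimes n}$, and $V_r(T^2)$ carries a distinguished orthogonal basis $\{e_i\}$ indexed by the admissible colors; hence $V_r(\partial M)$ has the orthogonal basis $\{e_{\boldsymbol{i}}\}$ indexed by multiintegers $\boldsymbol{i}$. The key step is to expand $RT_r(M)$ in this basis. The coefficient $\langle RT_r(M), e_{\boldsymbol{i}}\rangle$ is computed by gluing solid tori back onto $\partial M$ with cores colored by $\boldsymbol{i}$: this recovers $S^3$ together with the link $L$ colored by $\boldsymbol{i}$, so the coefficient equals the colored Jones polynomial $J'_{\boldsymbol{i}}(L;\tb)$ up to an explicit scalar coming from the basis normalization. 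Substituting this expansion into $\|RT_r(M)\|^2$ and invoking the orthogonality of $\{e_{\boldsymbol{i}}\}$ collapses the resulting double sum to a single sum $\sum_{\boldsymbol{i}} |c|^2\,|J'_{\boldsymbol{i}}(L;\tb)|^2$, which already has the desired shape.

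It remains to pin down the scalar $c$ and the range of summation, and this is where the two cases diverge and where the main difficulty lies. In the first case one works with the $SU(2)$-type theory at a primitive $4r$-th root of unity, the colors run over $1 \leq \boldsymbol{i} \leq r-1$, and the normalization assembles into $\eta_r^2$. In the second case one works with the $SO(3)$-type theory at a primitive $2r$-th root of unity with $r = 2m+1$; here the admissible colors are cut down to $1 \leq \boldsymbol{i} \leq m$ and the normalization produces $(\eta'_r)^2$. The hard part will be extracting the extra factor $2^{n-1}$ in the odd case: this requires tracking the symmetry $i_k \mapsto r - i_k$ of the colored Jones polynomial on each component together with the precise structure of the $SO(3)$ modular category, so that folding the full color range onto the reduced range $\{1,\ldots,m\}$ contributes one factor of $2$ per boundary component while a single global identification accounts for the exponent $n-1$ rather than $n$. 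Reconciling these normalization conventions with the chosen definition of $J'_{\boldsymbol{i}}$ is the principal bookkeeping obstacle; by contrast, the topological content is entirely carried by the relation $TV_r = \|RT_r\|^2$ and the solid-torus gluing computation.
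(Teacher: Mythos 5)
A preliminary caveat about the comparison you are asked for: the paper contains no proof of Theorem~\ref{thm:TVvsJ} at all --- it is stated as a quotation from Detcherry--Kalfagianni--Yang \cite{DKY} --- so the only proof to measure your proposal against is the one in the cited source. Against that benchmark, your outline reproduces the actual architecture of the argument: the central technical result of \cite{DKY} is precisely the extension of the Roberts--Walker-type identity $TV_r(M)=\|RT_r(M)\|^2$ to compact manifolds with boundary, and the theorem then follows by expanding $RT_r(S^3\setminus L)$ in the natural orthogonal basis of $V_r(T^2)^{\otimes n}$, the coefficients being (suitably normalized) colored Jones polynomials via exactly the solid-torus gluing you describe.

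Two warnings about the parts you defer. First, the identity $TV_r=\|RT_r\|^2$ for manifolds with boundary is not an off-the-shelf ``boundary version'' of the closed-manifold theorem: in \cite{DKY} it is itself the main theorem, and it must be established separately for the $SU(2)$-type theory ($A$ a primitive $4r$-th root) and for the $SO(3)$-type theory ($r$ odd, $A$ a primitive $2r$-th root); the latter case is where most of the work lies, so your starting point already contains the bulk of the proof. Second, your folding heuristic for the factor $2^{n-1}$ must be executed entirely inside the $SO(3)$ computation: parts (1) and (2) evaluate the invariants at \emph{different} roots of unity (hence different $\tb=A^4$), so (2) cannot be obtained by folding the sum in (1). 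Within the odd case the symmetry $i\mapsto r-i$ is fixed-point free because $r$ is odd, so folding contributes a clean factor of $2$ per component, and the drop from exponent $n$ to $n-1$ is normalization bookkeeping --- for a common $A$ one has $(\eta'_r)^2=2\eta_r^2$ --- rather than a topological ``global identification''; making that bookkeeping match the chosen normalization $J'_{\boldsymbol{i}}$ is, as you correctly anticipate, the principal remaining labor.
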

\begin{rem}
	In \cite{DKY}, the normalization of the colored Jones polynomial and conventions on parameters are slightly different from the ones in this paper.
	Therefore, we use the notation $J'_{\boldsymbol{i}}(L;\tb)$ in Theorem  \ref{thm:TVvsJ}.
\end{rem}
These invariants are conjectured to relate to the geometry of the 3-manifold.
Murakami and Murakami \cite{MM} conjectured that a certain limit of the colored Jones polynomial for a knot
is equal to the volume of the complement of the knot.
\begin{conj}[Volume Conjecture \cite{MM}]
	For any knot $K$,
	\[
		2 \pi \lim _{N \to \infty} \frac{\log |J_N(K;t=e^{\frac{2 \pi \iu}{N}})|}{N} = v_3 ||K||,
	\]
	where $v_3$ is the volume of the ideal regular tetrahedron in the three-dimensional hyperbolic space
	and $|| \cdot ||$ is the simplicial volume for the complement of $K$.
\end{conj}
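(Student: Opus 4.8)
The plan is to follow the saddle-point (steepest-descent) strategy that underlies all the known cases of the conjecture, specializing the potential-function machinery of this paper to a single component, $n=1$, with $t=e^{\frac{2\pi\iu}{N}}$ and a color $a(N)$ growing linearly in $N$ so as to recover the Kashaev invariant. First I would write $J_N(K;e^{\frac{2\pi\iu}{N}})$ as a finite state sum over the crossings of a diagram $D$, each quantum $R$-matrix entry being a ratio of quantum factorials. Rewriting each quantum factorial via the quantum dilogarithm and extracting the leading exponential behavior, the sum takes the shape $\sum_{\text{states}}\exp\!\big(\tfrac{N}{2\pi\iu}\Phi_D(\text{state})+o(N)\big)$, where $\Phi_D$ is the specialization of the potential function introduced in Section \ref{sec:pf}. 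This is precisely the analytic input the present paper is designed to provide.

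The second step is to approximate this sum by a multiple integral and apply the method of steepest descent, so that the exponential growth rate of $|J_N|$ is governed by $\im$ of the critical value of $\Phi_D$. Here I would invoke the saddle-point equation $\partial\Phi_D/\partial w_j=0$ and identify its solutions with the hyperbolicity (gluing) equations of an ideal triangulation of $S^3\setminus K$, following Yokota's correspondence \cite{Yo}. By Theorem \ref{thm:m1} and Theorem \ref{thm:m2}, the distinguished saddle point at $\boldsymbol{a}=\boldsymbol{1}$ recovers the complete hyperbolic structure, and $\im$ of the modified potential $\tilde{\Phi}_D$ at that point equals $\vol(S^3\setminus K)$.

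Combining these, $2\pi\lim_{N\to\infty}\log|J_N(K;e^{\frac{2\pi\iu}{N}})|/N$ should equal $\im$ of the geometric critical value, hence $\vol(S^3\setminus K)=v_3\|K\|$ for a hyperbolic knot. For a non-hyperbolic knot one argues separately that the growth rate vanishes, matching $\|K\|$ computed from the JSJ pieces via additivity of the Gromov norm (the torus-knot case being the one settled rigorously by Kashaev and Tirkkonen \cite{KT}).

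The hard part, and the reason this remains a conjecture in full generality, is the rigorous justification of the steepest-descent step. One must (i) show that the state sum is genuinely dominated by the contribution of the geometric saddle point rather than lost to cancellation or overtaken by a competing critical point with larger imaginary part, (ii) control the $o(N)$ error uniformly and handle the degenerate strata where the continuous approximation of the $R$-matrix breaks down, and (iii) guarantee that after deforming the contour the geometric saddle actually lies on the path of steepest descent. These analytic obstructions are at present surmountable only case by case; the contribution here is to make the geometric side of the correspondence completely precise, namely the identification of $\im\tilde{\Phi}_D$ at the saddle point with the volume of a cone-manifold, which is the essential input any such argument requires.
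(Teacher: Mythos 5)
There is nothing to compare your proposal against: the statement you were asked about is the Volume Conjecture itself, which the paper states as a conjecture (quoted from \cite{MM}) and does not prove --- indeed it remains open in general, and the paper's own results (Theorems \ref{thm:m1} and \ref{thm:m2}) only establish the \emph{geometric} side of the correspondence, namely that the critical value of the potential function computes cone-manifold volumes and that the point $(\boldsymbol{1},\sigma_1(\boldsymbol{1}),\ldots,\sigma_\nu(\boldsymbol{1}))$ recovers the complete structure. What the paper computes is the optimistic limit, not the actual limit of $\log|J_N|/N$. Your proposal is therefore not a proof but a strategy sketch, and to your credit you say so explicitly: the steepest-descent step --- items (i)--(iii) in your final paragraph --- is exactly the unresolved analytic core of the problem, and no argument in this paper or in the cited literature fills it for a general knot.

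Two further points deserve emphasis beyond what you wrote. First, passing from the state sum to a multiple integral is itself problematic at $t=e^{\frac{2\pi\iu}{N}}$: the summand involves quantum factorials that vanish or blow up along degenerate strata, the Poisson-summation or Riemann-sum approximation can fail precisely where the summand is largest, and severe oscillatory cancellation is known to occur (this is why even the figure-eight knot required a bespoke argument in the literature, and why \cite{DKY} and Theorem \ref{thm:CYconjforL} handle only specific links). Second, your treatment of non-hyperbolic knots is thinner than you suggest: additivity of the Gromov norm reduces nothing unless one can control the asymptotics on each JSJ piece through the colored Jones polynomial of the original knot, and outside torus knots \cite{KT} essentially no satellite cases are known. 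So the gap is genuine and twofold: the dominance of the geometric saddle is unjustified, and the non-hyperbolic reduction is asserted rather than argued. As a review of a conjecture, the correct conclusion is that your outline matches the expected route and correctly locates the obstructions, but it does not and cannot constitute a proof of the statement.
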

This conjecture was generalized to the one for $3$-manifold invariants.
\begin{conj}[Chen-Yang Conjecture \cite{CY}]
	For any $3$-manifold $M$ with a complete hyperbolic structure of the finite volume,
	\[
		2 \pi \lim _{r \to \infty}\frac{\log TV_r(M,q=e^{\frac{2 \pi \iu}{r}})}{r}= \vol(M),
	\]
	where $r$ runs over all odd integers, $TV(M)$ is a Turaev-Viro invariant of $M$ and $\vol(M)$ is a hyperbolic volume of $M$.
\end{conj}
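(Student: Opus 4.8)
The plan is to obtain the large-$r$ asymptotics of $TV_r(M,q=e^{2\pi\iu/r})$ by feeding the saddle point analysis of Theorems~\ref{thm:m1} and~\ref{thm:m2} into the relation of Theorem~\ref{thm:TVvsJ}. I would proceed in two stages: first prove the conjecture when $M=S^3\setminus L$ is a hyperbolic link complement, where the potential function $\Phi_D$ of this paper is available, and then reduce an arbitrary finite-volume hyperbolic $M$ to this situation by realizing it as a Dehn filling of a link complement whose asymptotics are already understood.

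For $M=S^3\setminus L$ with $r=2m+1$ odd and $A$ a primitive $2r$-th root of unity, Theorem~\ref{thm:TVvsJ}(2) gives $TV_r(M,q)=2^{n-1}(\eta'_r)^2\sum_{1\le\boldsymbol{i}\le m}|J'_{\boldsymbol{i}}(L;\tb)|^2$ with $\tb=q^2=A^4$. Since the number of summands is polynomial in $r$, the growth rate of $\log TV_r$ is controlled by the largest term, so it suffices to analyze a single $|J'_{\boldsymbol{i}}|^2$ along the geometric coloring. Here the construction of $\Phi_D$, which replaces the quantum $R$-matrix by continuous functions, lets me model the summand by $\exp(\tfrac{r}{2\pi\iu}\Phi_D)$ and apply the saddle point method in the variables $w_1,\ldots,w_\nu$. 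Setting the color parameters $\boldsymbol{a}=\boldsymbol{1}$, Theorem~\ref{thm:m2} identifies the resulting saddle $(\boldsymbol{1},\sigma_1(\boldsymbol{1}),\ldots,\sigma_\nu(\boldsymbol{1}))$ as the one carrying the complete hyperbolic structure on $M$, and Theorem~\ref{thm:m1} evaluates the imaginary part of the Neumann-Zagier potential $\tilde{\Phi}_D$ there as the hyperbolic volume. Tracking the normalizations through $\tb=q^2$, this should give
\[
	2\pi\lim_{r\to\infty}\frac{\log TV_r(M,q)}{r}=\im\tilde{\Phi}_D\big|_{w_j=\sigma_j(\boldsymbol{1})}=\vol(M).
\]

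For a general finite-volume hyperbolic $M$, closed or cusped, I would invoke the universality of fundamental shadow links: every such manifold is an integral Dehn filling on the complement of a fundamental shadow link, for which Belletti, Detcherry, Kalfagianni, and Yang \cite{BDKY} have already verified the conjecture. The idea is to express $TV_r$ of the filled manifold as a surgery sum over colors of the link-complement data and to argue that the dominant term of this sum corresponds, under Thurston's hyperbolic Dehn surgery, to the geometric saddle whose value of $\im\tilde{\Phi}_D$ converges to $\vol(M)$ as the filling slopes lengthen.

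The main obstacle is to make both the dominance claim and the surgery step rigorous. In the link-complement case one must show that the sum is genuinely governed by the geometric saddle---excluding competing critical points of larger imaginary part and bounding, uniformly in $r$, the error between the discrete summand and its continuous model $\exp(\tfrac{r}{2\pi\iu}\Phi_D)$; this is exactly the analytic difficulty that has kept even the original volume conjecture open beyond special families. The decisive obstruction to the full statement, however, is the surgery step: while Thurston's Dehn surgery theorem supplies the geometric input, converting it into uniform asymptotic control of the colored-Jones summand as the filling slopes vary---so that the saddle of the filled manifold tracks $\vol(M)$---is a substantial open problem, which is why the Chen-Yang conjecture remains open in full generality and is stated above as a conjecture.
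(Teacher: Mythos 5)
The statement you were given is presented in the paper as a \emph{conjecture}: the paper contains no proof of it, and explicitly positions its own results (Theorems \ref{thm:m1} and \ref{thm:m2}) as evidence --- a correspondence between the saddle point equations of $\Phi_D$ and the hyperbolicity and completeness equations --- with actual proofs known only in special cases (the figure-eight knot and the Borromean rings \cite{DKY}, fundamental shadow links \cite{BDKY}). Your proposal does not close this gap, and to your credit you say so yourself: the two steps you flag as obstacles are precisely the missing mathematical content, so what you have written is a strategy outline, not a proof. Concretely, in the link-complement stage, replacing the largest summand of the Turaev-Viro sum of Theorem \ref{thm:TVvsJ}(2) by $\exp\bigl(\frac{r}{2\pi\iu}\Phi_D\bigr)$ and evaluating at the geometric saddle would require uniform error estimates (Proposition \ref{prop:approx} gives only a pointwise $o(1)$), a steepest-descent or Poisson-summation argument moving the discrete color sum onto a contour through the saddle, and a proof that the geometric critical point dominates all competing ones; none of these is supplied, and their absence is exactly why the original volume conjecture is open outside special families. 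The Dehn-filling reduction in your second stage is likewise unproven: the paper's own Section \ref{sec:wrt} sketches the same surgery heuristic in explicitly conditional language (``the procedure might be as follows'') with no asymptotic control as the filling slopes vary.

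There is also a concrete slip in the one step you do carry out. In Theorem \ref{thm:TVvsJ}(2) the colors run over $1\le \boldsymbol{i}\le m$ with $\tb=q^2=e^{\frac{4\pi\iu}{r}}$, so the limiting color parameters satisfy $a_i\le 1/2$ and the relevant potential is $\Phi_{D,2}$, not $\Phi_{D,1}$. You cannot literally ``set $\boldsymbol{a}=\boldsymbol{1}$'' inside that sum; the expected dominant term sits at $a_i=1/2$, and it reaches the complete structure of Theorem \ref{thm:m2} only through the rescaling $\Phi_{D,2}(\boldsymbol{a},w_1,\ldots,w_\nu)=\frac{1}{2}\Phi_{D,1}(2\boldsymbol{a},w_1^2,\ldots,w_\nu^2)$. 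This bookkeeping is also the source of the factor $4\pi$ (rather than $2\pi$) in Theorem \ref{thm:CYconjforL} and of the window $a\in(5/12,1/2)$ in the paper's remark on $J_{a(N)}(4_1;\xi_N^2)$; your phrase ``tracking the normalizations through $\tb=q^2$'' is exactly where this must be made explicit, and as written your displayed limit conflates the $p=1$ and $p=2$ potentials.
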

Moreover, Detcherry, Kalfagianni, and Yang proved the following theorem by using Theorem \ref{thm:TVvsJ}:
\begin{thm}[Detcherry et al. \cite{DKY}] \label{thm:CYconjforL}
	Let $L$ be either the figure-eight knot or the Borromean rings, and let $M$ be the complement of $L$ in $S^3$. Then,
	\[
		2 \pi \lim _{r \to \infty}\frac{\log TV_r(M,q=e^{\frac{2 \pi \iu}{r}})}{r}=4 \pi \lim _{m \to \infty}\frac{\log |J'_{m}(L;\tb=e^{\frac{4 \pi \iu}{2m+1}})|}{2m+1} =\vol(M),
	\]
	where $r = 2m+1$ runs over all odd integers.
\end{thm}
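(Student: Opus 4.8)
The plan is to establish the two equalities separately: the left equality comes from the Turaev--Viro/colored-Jones relation of Theorem~\ref{thm:TVvsJ}, while the right one is a direct asymptotic (saddle point) analysis of an explicit formula for the colored Jones polynomial.

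For the left equality I would invoke Theorem~\ref{thm:TVvsJ}(2) with $r = 2m+1$ and $\tb = q^2 = e^{\frac{4\pi\iu}{2m+1}}$, so that
\[
	TV_r(M,q) = 2^{n-1}(\eta'_r)^2 \sum_{1 \leq \boldsymbol{i} \leq m} |J'_{\boldsymbol{i}}(L;\tb)|^2 .
\]
The point is that both the prefactor $2^{n-1}(\eta'_r)^2$ (which is $O(1/r)$ since $|\eta'_r|^2 = |A^2-A^{-2}|^2/r$) and the number $m^n$ of summands are polynomial in $r$. Sandwiching the sum,
\[
	\max_{\boldsymbol{i}} |J'_{\boldsymbol{i}}(L;\tb)|^2 \leq \sum_{1 \leq \boldsymbol{i} \leq m} |J'_{\boldsymbol{i}}(L;\tb)|^2 \leq m^n \max_{\boldsymbol{i}} |J'_{\boldsymbol{i}}(L;\tb)|^2 ,
\]
taking logarithms, dividing by $r$, and letting $m \to \infty$ annihilates every polynomial factor, so that
\[
	2\pi \lim_{r \to \infty}\frac{\log TV_r(M,q)}{r} = 4\pi \lim_{m \to \infty} \frac{\log \max_{\boldsymbol{i}} |J'_{\boldsymbol{i}}(L;\tb)|}{2m+1}.
\]
I would then show that the maximum over colors is asymptotically attained at the top color $\boldsymbol{i}=(m,\ldots,m)$, reducing the right-hand side to the single-color limit in the statement.

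For the right equality I would start from the explicit cyclotomic (Habiro-type) expansion of the colored Jones polynomial of the figure-eight knot and of the Borromean rings, a finite sum of ratios of quantum factorials. Writing $\tb = e^{\frac{4\pi\iu}{2m+1}}$ and using the standard asymptotics $\log (\tb;\tb)_k \sim \frac{1}{\log \tb}\big(\li(1)-\li(\tb^{k})\big)$, each summand becomes $\exp\big(\frac{1}{\log\tb}\,W(\cdots)\big)$ up to subexponential factors, where $W$ is a dilogarithm potential and $\frac{1}{\log\tb}=\frac{2m+1}{4\pi\iu}$. Replacing the sum by an integral and applying the saddle point method, the exponential growth rate is governed by $\frac{1}{4\pi}\im W$ evaluated at the critical point of $W$; the critical point equation is precisely a hyperbolicity equation for the ideal triangulation of $M$, and identifying the dilogarithm combination with the Lobachevsky-function formula for the volume gives $\im W = \vol(M)$. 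This yields $4\pi\lim_{m\to\infty}\frac{\log|J'_m(L;\tb)|}{2m+1} = \vol(M)$ and closes the chain.

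The hard part is the rigorous steepest-descent estimate at the nonstandard root of unity $\tb=e^{\frac{4\pi\iu}{2m+1}}$: one must prove that the discrete sum is genuinely concentrated near the saddle, bound the subexponential prefactors and the tail of the sum uniformly in the color, and check that the contributing saddle is the geometric one so that $\im W$ is the actual hyperbolic volume and not that of a competing critical point. For the Borromean rings there is the added multivariable bookkeeping of three simultaneous colors, and one must confirm that the joint maximum truly sits at the diagonal top color $(m,m,m)$; controlling the boundary of the summation range and excluding spurious critical points is where the delicate analysis concentrates.
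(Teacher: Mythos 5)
First, a point of reference: the paper contains no proof of this statement. Theorem~\ref{thm:CYconjforL} is quoted verbatim from Detcherry--Kalfagianni--Yang \cite{DKY}, so your proposal can only be compared with the proof given there. Your reduction of the left equality is sound and matches \cite{DKY}: by Theorem~\ref{thm:TVvsJ}(2), $TV_r$ is a positive prefactor of size $O(1/r)$ times a sum of at most $m^n$ nonnegative terms $|J'_{\boldsymbol{i}}|^2$, so after taking $\log(\cdot)/r$ the growth of $TV_r$ equals twice that of $\max_{\boldsymbol{i}}|J'_{\boldsymbol{i}}|$. But this leaves you owing two nontrivial facts that you only assert: (i) every sequence of colors has growth rate at most $\vol(M)/4\pi$, and (ii) the top color $\boldsymbol{i}=(m,\ldots,m)$ attains it.

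The genuine gap is in your plan for the right equality. A rigorous steepest-descent analysis of these discrete sums at $\tb=e^{\frac{4\pi\iu}{2m+1}}$ --- proving concentration near the geometric saddle and excluding competing critical points --- is precisely the step that nobody knows how to carry out in general; it is the crux of the volume conjecture's difficulty, and you flag it as ``the hard part'' without supplying the idea that removes it. The actual proof in \cite{DKY} avoids saddle points entirely by a positivity observation. For the figure-eight knot, Habiro's formula can be rewritten as
\[
	J_m(4_1;\tb)=\sum_{p}\ \prod_{k=1}^{p}\{m+k\}\{m-k\},
	\qquad \{n\}=2\iu\sin\frac{2\pi n}{2m+1},
\]
and for $1\leq k\leq m$ each factor $\{m+k\}\{m-k\}=-4\sin\frac{2\pi(m+k)}{2m+1}\sin\frac{2\pi(m-k)}{2m+1}$ is a nonnegative real number, since the first angle lies in $(\pi,2\pi)$ and the second in $[0,\pi)$. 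Hence the sum of at most $m+1$ nonnegative terms is squeezed between its largest term and $(m+1)$ times it, and no cancellation or stationary-phase control is needed. The logarithm of each term is a Riemann sum of $\log|2\sin|$, which converges to an explicit Lobachevsky-function integral; maximizing over the continuous parameter yields exactly $\vol(S^3\setminus 4_1)/4\pi$, and the same maximization gives the uniform upper bound (i) and the attainment at the top color (ii) in one stroke. The Borromean rings case proceeds identically after checking termwise positivity in Habiro's triple-sum formula, with the multivariable bookkeeping you mention but again with no oscillatory integrals. Without this max-term/positivity device your proposal is an outline of the obstruction rather than a proof; with it, the ``delicate analysis'' you defer disappears, which is exactly why the theorem is a theorem while the general volume conjecture is not.
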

\begin{rem}
	If $t$ is a root of unity, $\tb$ is the complex conjugate of $t$. Therefore,
	\[
		\lim _{m \to \infty}\frac{\log |J'_{m}(L;\tb=e^{\frac{4 \pi \iu}{2m+1}})|}{2m+1}=\lim _{m \to \infty}\frac{\log |J_{m}(L;t=e^{\frac{4 \pi \iu}{2m+1}})|}{2m+1}.
	\]
\end{rem}
Meanwhile, the evidence of the volume conjecture was established in \cite{Yo}.
What is important is that a saddle point equation of a potential function of the colored Jones polynomial for a knot coincides with 
a gluing condition of the ideal triangulation of the knot complement.
This and Theorem \ref{thm:TVvsJ} indicate that 
if we can establish a similar relationship between a hyperbolicity equation and a potential function of the colored Jones polynomial
with arbitrary colors, the relationship is evidence of the Chen-Yang conjecture for a link complement.
\subsection{The $R$-matrix of the colored Jones polynomial}
In this subsection, we give the $R$-matrix of the colored Jones polynomial by following \cite{KM}.
For an integer $r>1$, let $\A_r$ be the algebra generated by $X,\ Y,\ K,\ \overline{K}$ with the following relations:
\begin{align*}
	\overline{K}=K^{-1},\quad KX=sXK,\quad &KY=s^{-1}YK,\quad XY-YX=\frac{K^2-\overline{K}^2}{s-s^{-1}},\\
	&X^r=Y^r=0,\quad K^{4r}=1,
\end{align*}
where $s=e^{\frac{\pi \iu}{r}}$.
Namely, $\A_r$ is $\U_q(\mathrm{sl}_2)$ with the last $3$ relations. The universal $R$-matrix $\R \in \A_r \otimes \A_r$ is given by
\[
	\R=\frac{1}{4r}\sum _{\substack{0 \leq k < r \\ 0 \leq a,b <4r}}\frac{(s-s^{-1})^k}{[k]_s !}s^{-\frac{1}{2}(ab+(b-a)k+k)}X^kK^a \otimes Y^k K^b.
\]
Here, we put
\[
	[k]_s=\frac{s^k-s^{-k}}{s-s^{-1}},\quad [k]_s !=[k]_s \cdots [1]_s, \quad [0]_s!=1.
\]
Let $N$ be a positive integer and $m$ be the half-integer satisfying $N=2m+1$.
We define the action of $\A_r$ on an $N$-dimensional complex vector space $V$ with a basis $\{e_{-m},e_{-m+1},\ldots,e_m\}$ by
\begin{align*}
	Xe_i &= [m-i+1]_s e_{i-1}, \\
	Ye_i &= [m+i+1]_s e_{i+1}, \\
	Ke_i &= s^{-i}e_i.
\end{align*}
Here, $e_i$ in this paper corresponds to $e_{-i}$ in \cite{KM}.
Let $V'$ be an $(N'=2m'+1)$-dimensional complex vector space with basis $\{e'_{-m'},\ldots,e'_{m'}\}$.
Then, the quantum $R$-matrix $R_{VV'}:V \otimes V' \to V' \otimes V$ is given by
\begin{align*}
	R_{VV'}(e_i \otimes e'_j)=\sum _{k=0}^{\min \{m+i,\ m'-j \}} &\frac{\{m-i+k\}_s!\{m'+j+k\}_s!}{\{k\}_s!\{m-i\}_s!\{m'+j\}_s!} \\
	& \times s^{2ij+k(i-j)-k(k+1)/2}e'_{j+k} \otimes e_{i-k},
\end{align*}
where
\[
	\{k\}_s = s^{k}-s^{-k}, \quad \{k\}_s!=\{k\}_s \cdots \{1\}_s,\quad \{0\}_s! = 1.
\]
Also, its inverse is
\begin{align*}
	R_{VV'}^{-1}(e'_i \otimes e_j) = \sum _{k=0}^{\min \{m-i,\ m'+j \}} &(-1)^k \frac{\{m-j+k\}_s!\{m'+i+k\}_s!}{\{k\}_s!\{m-j\}_s!\{m'+i\}_s!} \\
	& \times s^{-2ij+k(i-j)/2+k(k+1)/2}e_{j-k} \otimes e'_{i+k}.
\end{align*}
These matrices and the isomorphism $\mu: V \to V$, where
\[
	\mu(e_i)=s^{-2i}e_i,\quad(i=-m,\ldots,m)
\]
defines a link invariant $\tilde{J}$.
If $V=V'$ and $\dim V = 2$, then
\[
	R_{VV}=\left(\begin{array}{cccc}
		s^{\frac{1}{2}}&	0&	0&	0\\
		0&	0&	s^{-\frac{1}{2}}&	0\\
		0&	s^{-\frac{1}{2}}&	s^{\frac{1}{2}}-s^{-\frac{3}{2}}&	0 \\
		0&	0&	0&	s^{\frac{1}{2}}
	\end{array} \right)
\]
and satisfies
\[
	s^{\frac{1}{2}}R_{VV} - s^{-\frac{1}{2}}R_{VV}^{-1}= (s-s^{-1})I_4,
\]
where $I_4$ is the $4 \times 4$ identity matrix.
Considering the writhes, this implies
\begin{equation} \label{eq:Jske}
	s^2 \tilde{J}(L_+)-s^{-2} \tilde{J}(L_-)=(s-s^{-1})\tilde{J}(L_0),
\end{equation}
where $L_+,\ L_-$, and $L_0$ are the links in Figure \ref{fig:LLL}.
	\begin{figure}[htb]
		\centering
		\includegraphics[scale=0.7]{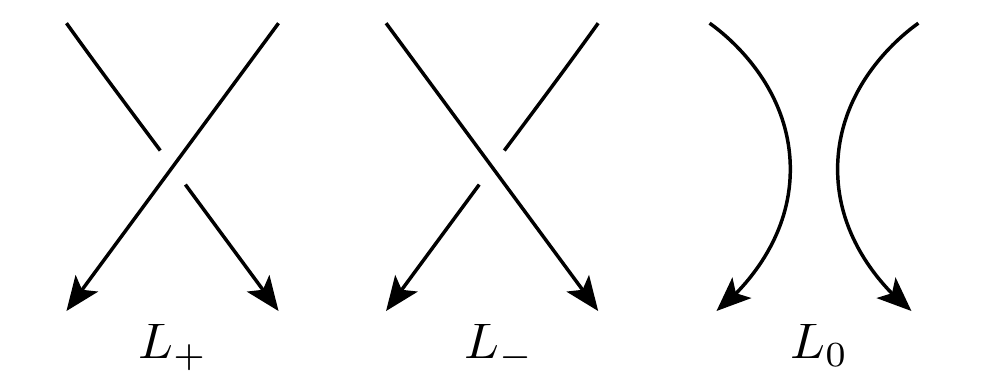}
		\caption{The links that are identical except for these regions.}
		\label{fig:LLL}
	\end{figure}
Under the substitution $s=-t^{-\frac{1}{2}}$, the relation \eqref{eq:Jske} coincides with the skein relation of the Jones polynomial.
Therefore, under this substitution the $R$-matrix of the colored Jones polynomial $J_{\boldsymbol{i}}(L;t)$
for $L$ with colors $\boldsymbol{i}=(i_1,\ldots,i_n) \in \mathbb{Z}^n_{>0}$, where $i_j$, with $j=1,\ldots,n$, is the dimension of the assigned representation is
\begin{align} \label{eq:Rmat}
\begin{split}
	R_{VV'}(e_i \otimes e'_j)=\sum _{k=0}^{\min \{m+i,\ m'-j \}} &(-1)^{k+k(m+m')+2ij}\frac{\{m-i+k\}!\{m'+j+k\}!}{\{k\}!\{m-i\}!\{m'+j\}!} \\
	& \times t^{-ij-\frac{k(i-j)}{2}+\frac{k(k+1)}{4}}e'_{j+k} \otimes e_{i-k},
\end{split}
\end{align}
and its inverse is
	\begin{align*}
		R_{VV'}^{-1}(e'_i \otimes e_j) = \sum _{k=0}^{\min \{m-i,\ m'+j \}} &(-1)^{-k(m+m')-2ij} \frac{\{m-j+k\}!\{m'+i+k\}!}{\{k\}!\{m-j\}!\{m'+i\}!} \\
		& \times t^{ij-\frac{k(i-j)}{2}-\frac{k(k+1)}{4}}e_{j-k} \otimes e'_{i+k},
	\end{align*}
where
\[
	\{k\} = t^{\frac{k}{2}}-t^{-\frac{k}{2}}, \quad \{k\}!=\{k\}\{k-1\}\cdots \{1\},\quad \{0\}! = 1.
\]

\section{Potential function} \label{sec:pf}
Let $L=L_1 \cup \cdots \cup L_n$ be an oriented $n$-component link. We deform $L$ so that $L$ is a closure of a braid.
Let $D$ be its oriented diagram, and $\xi_N=e^{\frac{2\pi \iu}{N}}$ be the primitive $N$-th root of unity.
For each link component $L_i$, with $i=1,\ldots,n$, we assign its color $a_i(N) \in \mathbb{Z}_{>0}$. We put $\boldsymbol{a}(N)=(a_1(N),\ldots,a_n(N))$.
In this section, we determine a potential function of the colored Jones polynomial
$J_{\boldsymbol{a}(N)}(L;\xi_N^p)$ for $L$, where $p$ is a nonzero integer.
See \cite{Ch} for details.
\begin{defi}
	Suppose that the asymptotic behavior of a certain quantity $Q_N$ for a sufficiently large $N$ is %can be written as
	\[
		Q_N \sim \int \cdots \int _\Omega P_N e^{\frac{N}{2 \pi \iu} \Phi(z_1,\ldots,z_\nu)}dz_1 \cdots dz_\nu,
	\]
	where $P_N$ grows at most polynomially and $ \Omega $ is a region in $\mathbb{C}^\nu$. We call this function $\Phi(z_1,\ldots,z_\nu)$ a potential function of $Q_N$.
\end{defi}
We can easily verify that
\begin{equation} \label{eq:qfbr}
	\{k\}! = (-1)^k t^{-\frac{k(k+1)}{4}}(t)_k,
\end{equation}
where
\[
	(t)_k =(1-t)(1-t^2) \cdots (1-t^k).
\]
Thus, we approximate $(\xi_N^p)_k$ by continuous functions.
\begin{prop} \label{prop:approx}
	For a sufficiently large integer $N$,
	\[
		\log(\xi^p_N)_k = \frac{N}{2p \pi \iu}\left( -\li(\xi^{pk}_N)+\frac{\pi^2}{6}+o(1)\right),
	\]
	where $\li$ is a dilogarithm function
	\[
		\li(z) = -\int^z_0 \frac{\log(1-x)}{x}dx.
	\]
\end{prop}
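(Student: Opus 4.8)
The plan is to pass from the product to a sum and recognize it as a Riemann sum for a dilogarithmic integral. By definition $(\xi^p_N)_k = \prod_{j=1}^k(1-\xi^{pj}_N)$, so with the principal branch of the logarithm,
\[
	\log(\xi^p_N)_k = \sum_{j=1}^k \log\bigl(1-\xi^{pj}_N\bigr) = \sum_{j=1}^k \log\bigl(1-e^{2\pi\iu pj/N}\bigr).
\]
I would view the right-hand side as a Riemann sum with unit step for $g(j)=\log(1-e^{2\pi\iu pj/N})$ and replace it by $\int_0^k g(j)\,dj$. This integral is evaluated in closed form: since $\li'(z)=-\log(1-z)/z$, the chain rule gives $\frac{d}{dj}\li(e^{2\pi\iu pj/N}) = -\frac{2\pi\iu p}{N}\log(1-e^{2\pi\iu pj/N})$, whence
\[
	\int_0^k g(j)\,dj = -\frac{N}{2p\pi\iu}\Bigl[\li(e^{2\pi\iu pj/N})\Bigr]_{j=0}^{j=k} = \frac{N}{2p\pi\iu}\Bigl(-\li(\xi^{pk}_N)+\frac{\pi^2}{6}\Bigr),
\]
using $\li(1)=\pi^2/6$. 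This is exactly the asserted main term, so the content of the proposition is the error estimate $\sum_{j=1}^k g(j) - \int_0^k g(j)\,dj = \frac{N}{2p\pi\iu}\cdot o(1)$, i.e.\ an error of order $o(N)$.

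To control this difference I would compare the sum with the integral directly, using $\bigl|\sum_{j=1}^{k} g(j)-\int_0^k g(j)\,dj\bigr|\le \int_0^1 |g(x)|\,dx + \sum_{j=1}^{k-1}\int_j^{j+1}|g(x)-g(j)|\,dx \le \int_0^1|g|\,dx + \int_1^k |g'(x)|\,dx$, where one must restrict $k$ to the range (guaranteed by the bounds on the summation index in the $R$-matrix) for which the points $e^{2\pi\iu pj/N}$ stay on a single arc of the unit circle, so that the principal branch and the continuity of $\li$ away from $z=1$ apply. Away from the lower endpoint, $g$ and $g'$ are controlled, and a short computation shows $g'(j)\sim 1/j$ as $j\to 0^+$, so $\int_1^k|g'|\,dx = O(\log N)$.

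The main obstacle is the endpoint singularity at $j=0$, where $1-e^{2\pi\iu pj/N}\to 0$ and $g(j)\to-\infty$ logarithmically. Near $j=0$ one has $1-e^{2\pi\iu pj/N}\sim -2\pi\iu pj/N$, so $g(j)\sim \log j - \log N + \mathrm{const}$; the singularity is integrable and the sum begins at $j=1$, so neither side diverges, but the accumulated $\log N$ contributions from $\int_0^1|g|\,dx$ and from the term $g(1)$ must be shown to cancel to leading order. The delicate point is precisely to verify that these pieces combine to $O(\log N)$ rather than $O(N)$; granting this, the total error is $O(\log N)=o(N)$, which is absorbed into the factor $o(1)$, and combining the closed-form integral with this estimate yields the stated asymptotic.
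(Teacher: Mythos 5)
Your proposal is correct and takes essentially the same route as the paper, which also writes $\log(\xi_N^p)_k=\sum_{j=1}^k\log\bigl(1-e^{2\pi\iu pj/N}\bigr)$, replaces the sum by the integral (a Riemann-sum step, with $\theta=j/N$), and converts it to $-\li(\xi_N^{pk})+\pi^2/6$ by the substitution $x=e^{2\pi\iu p\theta}$; your error analysis simply makes explicit the $o(1)$ that the paper asserts without proof. One small simplification: no cancellation between $\int_0^1|g|\,dx$ and $g(1)$ is needed, since each is separately $O(\log N)=o(N)$, so your ``delicate point'' is in fact immediate.
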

\begin{rem}
	The dilogarithm function satisfies
	\[
		\li(z) = \sum^{\infty}_{k=1}\frac{z^2}{k^2}
	\]
	for $|z|<1$, and
	\[
		\li(1) = \sum^{\infty}_{k=1}\frac{1}{k^2} = \frac{\pi^2}{6}.
	\]
\end{rem}
\begin{proof}
	By the direct calculation, we have
	\begin{align*}
			\log(\xi^p_N)_k &= \sum^k_{j=1}\log(1-e^{\frac{2p \pi j \iu }{N}}) \\
			&= N \left( \int^{\frac{k}{N}}_0 \log(1-e^{2p \pi \iu \theta})d \theta+o(1) \right)\\
			&=\frac{N}{2p \pi \iu}\left( \int^{\xi^{pk}_N}_1 \frac{\log(1-x)}{x}dx +o(1) \right)\\
			&= \frac{N}{2p \pi \iu}\left( -\li(\xi^{pk}_N)+\frac{\pi^2}{6}+o(1)\right).
	\end{align*}
\end{proof}
First, we consider the case where the strings at a crossing are in the different components.
Let $\{a(N)\}_{N=1,2,\ldots}$ and $\{b(N)\}_{N=1,2,\ldots}$ be sequences of natural numbers.
We can approximate the $R$-matrix by Proposition \ref{prop:approx}.
For a positive crossing of the link diagram, the $R$-matrix $R_{VV'}$ of \eqref{eq:Rmat} is labeled.
For convenience, we recall the summand of the $R$-matrix:
\[
	(-1)^{k+k(m_N+m'_N)+2ij}t^{-ij-\frac{k(i-j)}{2}+\frac{k(k+1)}{4}}\frac{\{m_N-i+k\}!\{m'_N+j+k\}!}{\{k\}!\{m_N-i\}!\{m'_N+j\}!}.
\]
Here, $m_N$ and $m'_N$ are the half-integers satisfying $a(N) = 2m_N +1$ and $b(N)=2m'_N +1$.
If we assume that $a(N)$ and $b(N)$ are odd numbers, indices $i$ and $j$ are integers.
Moreover, by adding $2$ to $a(N)$ or $b(N)$ if necessary, we can assume that $m_N+m'_N$ is an even integer
without changing the values of the limit $a(N)/N$ and $b(N)/N$. Therefore, under these assumptions the summand is
\[
	(-1)^{k}t^{-ij-\frac{k(i-j)}{2}+\frac{k(k+1)}{4}}\frac{\{m_N-i+k\}!\{m'_N+j+k\}!}{\{k\}!\{m_N-i\}!\{m'_N+j\}!}.
\]
From the equality \eqref{eq:qfbr}, we have
\[
	t^{-ij-\frac{m_N+m'_N}{2} k}\frac{(t)_{m_N -i+k}(t)_{m'_N +j+k}}{(t)_{k}(t)_{m_N-i}(t)_{m'_N+j}}. %correct error
\]
Under substitution $x=\xi_N^i$, $y=\xi_N^j$ and $z=\xi_N^k$, the potential function for a positive crossing is
	\begin{align*}
		\frac{1}{p} &\left\{-\pi \iu p \frac{a+b}{2} \log(z^p)- \log(x^p)\log(y^p) - \frac{\pi^2}{6} \right.\\
		&-\left. \li \left(e^p_a \frac{z^p}{x^p} \right) -\li (e^p_b y^pz^p)  +\li \left(\frac{e^p_a}{x^p} \right) +\li(e^p_b y^p) +\li(z^p) \right\},
	\end{align*}
where $a(N)/N \to a$, $b(N)/b \to b$ and $e_a = e^{\pi \iu a}$.
Note that the indices of the summand are labeled to the edges of the link diagram.
We change these indices to the ones corresponding to regions of the link diagram as shown in Figure \ref{fig:regionedge}.
	\begin{figure}[htb]
		\centering
		\includegraphics[scale=1.0]{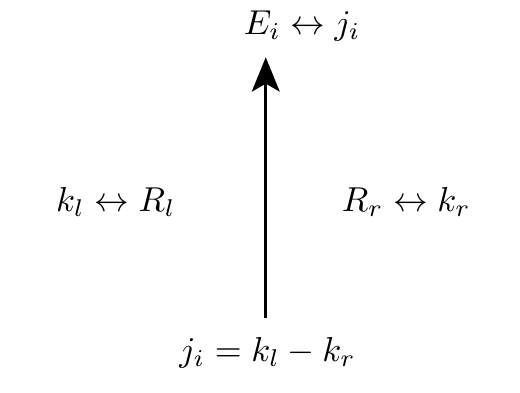}
		\caption{Indices corresponding to an edge $E_i$ and regions $R_l,\ R_r$.}
		\label{fig:regionedge}
	\end{figure}
If $k_{j_1},\ldots,k_{j_4}$ are indices around a crossing as shown in Figure \ref{fig:regionindices}, we have
\[
	i=k_{j_2}-k_{j_1},\quad j=k_{j_3}-k_{j_2},\quad j+k=k_{j_4}-k_{j_1},\quad i-k=k_{j_3}-k_{j_4}.
\]
From the above equations, we have $k=k_{j_2}+k_{j_4}-k_{j_1}-k_{j_3}$. Therefore by putting $w_{j_i}=\xi_N^{k_{j_i}}$ and substituting
\[
x=\frac{w_{j_2}}{w_{j_1}},\quad y=\frac{w_{j_3}}{w_{j_2}},\quad z=\frac{w_{j_2}w_{j_4}}{w_{j_1}w_{j_3}},
\]
the potential function for a positive crossing $c$ is
	\begin{align*}
		\Phi_{c,p}^+&=\frac{1}{p} \left\{\pi \iu p^2 \frac{a+b}{2} \log \frac{w_{j_1}w_{j_3}}{w_{j_2}w_{j_4}}- p^2 \log \frac{w_{j_2}}{w_{j_1}} \log \frac {w_{j_3}}{w_{j_2}} \right.\\
		&-\li \left(e^p_a \frac{w_{j_4}^p}{w_{j_3}^p}\right)-\li \left(e^p_b \frac{w_{j_4}^p}{w_{j_1}^p}\right)+\li \left(\frac{w_{j_2}^p w_{j_4}^p}{w_{j_1}^p w_{j_3}^p} \right)\\
		&+\li \left(e^p_a \frac{w_{j_1}^p}{w_{j_2}^p}\right)+\left.\li \left(e^p_b \frac{w_{j_3}^p}{w_{j_2}^p} \right)-\frac{\pi^2}{6}\right\}.
	\end{align*}
	\begin{figure}[htb]
		\centering
		\includegraphics[scale=0.7]{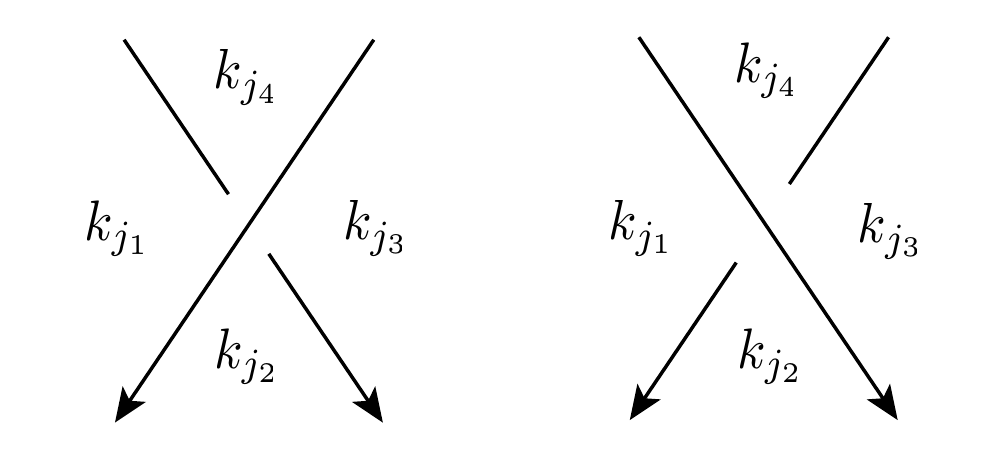}
		\caption{Indices corresponding to regions around a crossing.}
		\label{fig:regionindices}
	\end{figure}
If the strings at a crossing are in the same component, we have to consider the modification on the Reidemeister move I.
The Reidemeister  move I on the component with a color $a(N)$ leads to the multiplication by $s^{2m_N^2+2m_N}=(-1)^{2m_N^2+2m_N}t^{-m_N^2-m_N}$.
Therefore, we have to multiply $(-1)^{-2m_N^2-2m_N}t^{m_N^2+m_N}$ to cancel it.
Under the assumption that $a(N)$ is an odd integer, this corresponds to the addition of the function $(\pi \iu pa)^2/p$.
Therefore, the potential function is
	\begin{align*}
		&\Phi_{c,p}^+ =\frac{1}{p} \left\{(\pi \iu pa)^2 + \pi \iu p^2 a \log \frac{w_{j_1}w_{j_3}}{w_{j_2}w_{j_4}}- p^2 \log \frac{w_{j_2}}{w_{j_1}} \log \frac {w_{j_3}}{w_{j_2}} -\frac{\pi^2}{6}\right.\\
		&-\li \left(e^p_a \frac{w_{j_4}^p}{w_{j_3}^p}\right)-\li \left(e^p_a \frac{w_{j_4}^p}{w_{j_1}^p}\right)+\li \left(\frac{w_{j_2}^p w_{j_4}^p}{w_{j_1}^p w_{j_3}^p} \right)+\li \left(e^p_a \frac{w_{j_1}^p}{w_{j_2}^p}\right)+\left.\li \left(e^p_a \frac{w_{j_3}^p}{w_{j_2}^p} \right)\right\}.
	\end{align*}
Similarly, we obtain
	\begin{align*}
		\Phi_{c,p}^-&=\frac{1}{p} \left\{-\pi \iu p^2 \frac{a+b}{2} \log \frac{w_{j_1}w_{j_3}}{w_{j_2}w_{j_4}}+ p^2 \log \frac{w_{j_3}}{w_{j_4}} \log \frac{w_{j_4}}{w_{j_1}} \right.\\
		&-\li \left(e^p_a \frac{w_{j_1}^p}{w_{j_4}^p} \right)-\li \left(e^p_b\frac{w_{j_3}^p}{w_{j_4}^p} \right)-\li \left(\frac{w_{j_2}^p w_{j_4}^p}{w_{j_1}^p w_{j_3}^p} \right)\\
		&+\left.\li \left(e^p_a \frac{w_{j_2}^p}{w_{j_3}^p}\right)+\li \left(e^p_b \frac{w_{j_2}^p}{w_{j_1}^p}\right)+\frac{\pi^2}{6}\right\}
	\end{align*}
for a negative crossing $c$ between different components, and
	\begin{align*}
		&\Phi_{c,p}^-=\frac{1}{p} \left\{-(\pi \iu pa)^2 -\pi \iu p^2 a \log \frac{w_{j_1}w_{j_3}}{w_{j_2}w_{j_4}}+ p^2 \log \frac{w_{j_3}}{w_{j_4}} \log \frac{w_{j_4}}{w_{j_1}}+\frac{\pi^2}{6} \right.\\
		&-\li \left(e^p_a \frac{w_{j_1}^p}{w_{j_4}^p} \right)-\li \left(e^p_a\frac{w_{j_3}^p}{w_{j_4}^p} \right)-\li \left(\frac{w_{j_2}^p w_{j_4}^p}{w_{j_1}^p w_{j_3}^p} \right)+\left.\li \left(e^p_a \frac{w_{j_2}^p}{w_{j_3}^p}\right)+\li \left(e^p_a \frac{w_{j_2}^p}{w_{j_1}^p}\right)\right\}
	\end{align*}
for a negative crossing $c$ between the same component.
The potential function $\Phi_{D,p}$ of $J_{\boldsymbol{a}(N)}(L,\xi^p_N)$ is a summation of these
potential functions with respect to all crossings of $D$. That is,
\[
	\Phi _{D,p}(\boldsymbol{a},w_1,\ldots,w_\nu)= \sum _{c\text{ is a crossing}}\Phi^{\mathrm{sgn}(c)}_{c,p},
\]
where
\[
	\boldsymbol{a} = (a_1 ,\ldots,a_n ),\quad a_i=\lim_{N \to \infty} \frac{a_i(N)}{N}
\]
and $\mathrm{sgn}(c)$ is a signature of a crossing $c$.
This potential function essentially coincides with Yoon's generalized potential function \cite{Yon}.
We can easily verify the following property by the definition of $ \Phi_{D,p}$:
\begin{prop}
	$\Phi _{D,p}(\boldsymbol{a},w_1,\ldots,w_\nu)$ satisfies
	\[
		\Phi_{D,p}(\boldsymbol{a},w_1,\ldots,w_\nu) = \frac{1}{p}\Phi_{D,1}(p \boldsymbol{a},w_1^p,\ldots,w_\nu^p).
	\]
\end{prop}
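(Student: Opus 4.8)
The plan is to use the fact that, by construction, $\Phi_{D,p}$ is a sum over the crossings $c$ of $D$ of the local potentials $\Phi^{\mathrm{sgn}(c)}_{c,p}$. Both operations appearing in the claimed identity — the substitution $(\boldsymbol{a},w_1,\ldots,w_\nu)\mapsto(p\boldsymbol{a},w_1^p,\ldots,w_\nu^p)$ and the overall scaling by $1/p$ — are linear in the summation over $c$ and are applied uniformly to every term. Hence it suffices to establish the crossing-local identity
\[
	\Phi^{\pm}_{c,p}(\boldsymbol{a},w_1,\ldots,w_\nu)=\frac{1}{p}\,\Phi^{\pm}_{c,1}(p\boldsymbol{a},w_1^p,\ldots,w_\nu^p)
\]
separately in each of the four cases (positive or negative crossing, strings in different components or in the same component), and then to sum over all crossings of $D$.

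First I would isolate the three elementary scaling rules that drive the verification. Since $e_a=e^{\pi\iu a}$, replacing $a$ by $pa$ yields $e_{pa}=e_a^p$, so each dilogarithm argument of the form $e_a\,w_j/w_{j'}$ occurring in $\Phi^{\pm}_{c,1}$ becomes, after the substitution, exactly $e_a^p\,w_j^p/w_{j'}^p$, which is the argument written in $\Phi^{\pm}_{c,p}$. Next, $\log(w_j^p/w_{j'}^p)=p\log(w_j/w_{j'})$, so a product of two such logarithms acquires a factor $p^2$, while the single-logarithm term weighted by $(a+b)/2$ acquires one factor $p$ from the logarithm and a further factor $p$ from $a+b\mapsto p(a+b)$, again giving $p^2$. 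Finally, the Reidemeister-I correction $(\pi\iu pa)^2$ present at same-component crossings is invariant under $a\mapsto pa$ together with setting the prefactor $p$ to $1$, and the constants $\pi^2/6$ do not involve $p$.

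With these rules the verification is a direct term-by-term match: setting $p=1$ in the explicit formula for $\Phi^{\pm}_{c,p}$ and then applying the substitution reproduces, monomial by monomial, the brace contents of $\Phi^{\pm}_{c,p}$, after which the common factor $1/p$ accounts for the prefactor on the right-hand side. Conceptually, the point is that every polynomial term inside the braces is homogeneous of degree $2$ in the pair (colour, $\log w$) and therefore scales by $p^2$, matching the explicit $p^2$ (or $p\cdot p$) coefficients already present in the definition, while the dilogarithm terms match identically via $e_a^p=e_{pa}$ and the power law on the $w_j$. I do not expect a genuine obstacle; the only thing that requires care is bookkeeping, namely checking that the $p^2$ generated by the substitution agrees with the explicit prefactors in each of the four local formulas, that the $\pm(\pi\iu pa)^2$ terms line up for same-component crossings, and that the signs and the $\pi^2/6$ constants are preserved across all cases.
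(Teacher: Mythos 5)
Your proposal is correct and coincides with the paper's approach: the paper simply asserts the identity ``by the definition of $\Phi_{D,p}$'' without writing out details, and your crossing-by-crossing verification (using $e_{pa}=e_a^p$, $\log w_j^p = p\log w_j$, and matching the explicit $p^2$ prefactors and the $\pm(\pi\iu pa)^2$ correction terms) is exactly the routine check being left to the reader. No gap: your separate handling of the constant $\pi^2/6$, which is not degree-$2$ homogeneous but matches identically inside the braces before the common $1/p$ factor, covers the one point your homogeneity heuristic alone would miss.
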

Therefore, We mainly consider the case where $p=1$ and write $ \Phi _D = \Phi_{D,1}$.

\section{A Non-complete hyperbolic structure} \label{sec:nchypstr}

In this section, we provide geometric meanings of the potential function.
In the rest of this paper, we assume that $L$ is a hyperbolic link with $n$ components.
In this section, we also assume that $a_i \in [1-\varepsilon,1]$ for all $i=1,\ldots,n$,
where $ \varepsilon $ is a sufficiently small positive real number.
First, we consider derivatives of the potential functions with respect to the parameters corresponding to the regions of the link diagram \cite{CM}.
For a positive crossing $c$ between different components, we have
	\begin{align} \label{eq:pglu}
	\begin{split}
		w_{j_1}\frac{\partial \Phi^+_c}{\partial w_{j_1}}&= \pi \iu \frac{a-b}{2} + \log \left(1-e_a\frac{w_{j_1}}{w_{j_2}}\right)^{-1}\left(1-e_b^{-1}\frac{w_{j_1}}{w_{j_4}}\right)^{-1}\left( 1-\frac{w_{j_1}w_{j_3}}{w_{j_2}w_{j_4}}\right),\\
		w_{j_2}\frac{\partial \Phi^+_c}{\partial w_{j_2}}&= \pi \iu \frac{a+b}{2} + \log \left(1-e_a^{-1}\frac{w_{j_2}}{w_{j_1}}\right)\left(1-e_b^{-1}\frac{w_{j_2}}{w_{j_3}}\right)\left( 1-\frac{w_{j_2}w_{j_4}}{w_{j_1}w_{j_3}}\right)^{-1},\\
		w_{j_3}\frac{\partial \Phi^+_c}{\partial w_{j_3}}&= \pi \iu \frac{-a+b}{2} + \log \left(1-e_a^{-1}\frac{w_{j_3}}{w_{j_4}}\right)^{-1}\left(1-e_b \frac{w_{j_3}}{w_{j_2}}\right)^{-1}\left( 1-\frac{w_{j_1}w_{j_3}}{w_{j_2}w_{j_4}}\right),\\
		w_{j_4}\frac{\partial \Phi^+_c}{\partial w_{j_4}}&= -\pi \iu \frac{a+b}{2} + \log \left(1-e_a \frac{w_{j_4}}{w_{j_3}}\right)\left(1-e_b\frac{w_{j_4}}{w_{j_1}}\right)\left( 1-\frac{w_{j_2}w_{j_4}}{w_{j_1}w_{j_3}}\right)^{-1}.
	\end{split}
	\end{align}
Similarly, for a negative crossing $c$ between different components, we have
	\begin{align} \label{eq:nglu}
	\begin{split}
		w_{j_1}\frac{\partial \Phi^-_c}{\partial w_{j_1}}&= \pi \iu \frac{-a+b}{2} + \log \left(1-e_a \frac{w_{j_1}}{w_{j_4}}\right)\left(1-e_b^{-1} \frac{w_{j_1}}{w_{j_2}}\right)\left( 1-\frac{w_{j_1}w_{j_3}}{w_{j_2}w_{j_4}}\right)^{-1},\\
		w_{j_2}\frac{\partial \Phi^-_c}{\partial w_{j_2}}&= \pi \iu \frac{a+b}{2} + \log \left(1-e_a \frac{w_{j_2}}{w_{j_3}}\right)^{-1}\left(1-e_b \frac{w_{j_2}}{w_{j_1}}\right)^{-1}\left( 1-\frac{w_{j_2}w_{j_4}}{w_{j_1}w_{j_3}}\right),\\
		w_{j_3}\frac{\partial \Phi^-_c}{\partial w_{j_3}}&= \pi \iu \frac{a-b}{2} + \log \left(1-e_a^{-1} \frac{w_{j_3}}{w_{j_2}}\right)\left(1-e_b \frac{w_{j_3}}{w_{j_4}}\right)\left( 1-\frac{w_{j_1}w_{j_3}}{w_{j_2}w_{j_4}}\right)^{-1},\\
		w_{j_4}\frac{\partial \Phi^-_c}{\partial w_{j_4}}&= -\pi \iu \frac{a+b}{2} + \log \left(1-e_a^{-1}\frac{w_{j_4}}{w_{j_1}}\right)^{-1}\left(1-e_b^{-1}\frac{w_{j_4}}{w_{j_3}}\right)^{-1}\left( 1-\frac{w_{j_2}w_{j_4}}{w_{j_1}w_{j_3}}\right).
	\end{split}
	\end{align}
If a crossing is between the same component, the derivatives are \eqref{eq:pglu} and \eqref{eq:nglu} with $a=b$.
These correspond to Thurston's triangulation \cite{Dt} of the link complement (see Figures \ref{fig:pcrossoct}, \ref{fig:ncrossoct}).
\begin{figure}[tb]
	\begin{tabular}{cc}
		\begin{minipage}[t]{0.45\hsize}
		\captionsetup{width=5.5cm}
		\centering
		\includegraphics[scale=1.0]{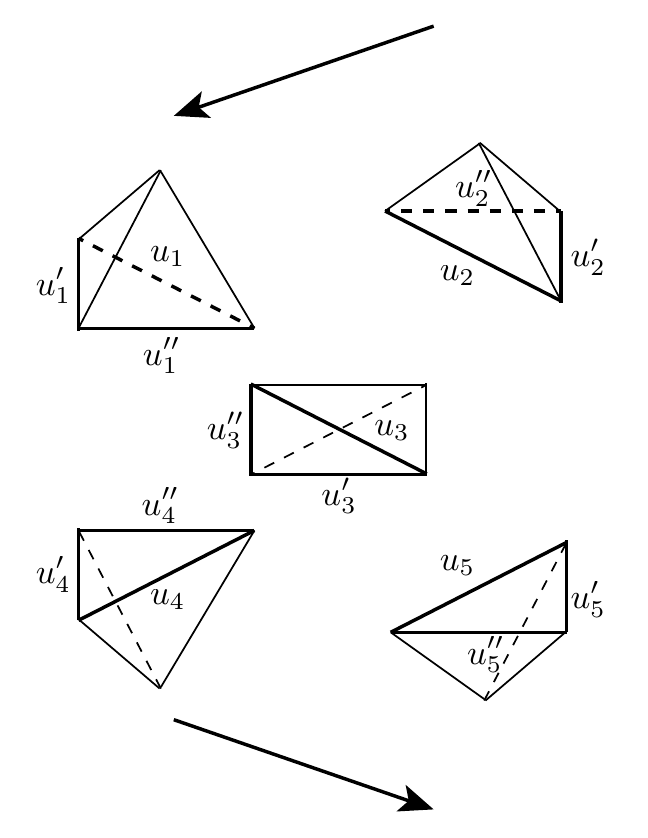}
		\caption{Ideal tetrahedra on a positive crossing.}
		\label{fig:pcrossoct}
		\end{minipage}
	
		\begin{minipage}[t]{0.45\hsize}
		\captionsetup{width=5.5cm}
		\centering
		\includegraphics[scale=1.0]{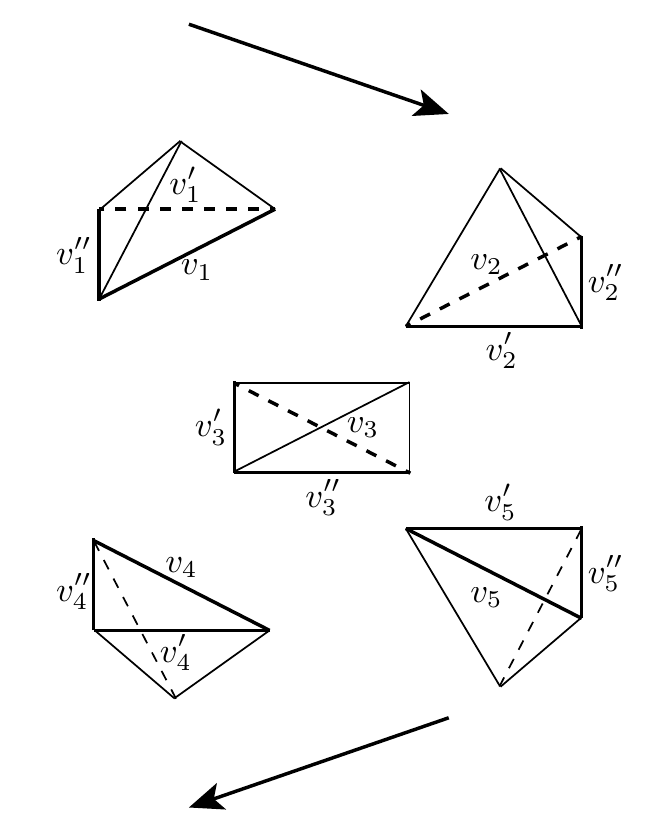}
		\caption{Ideal tetrahedra on a negative crossing.}
		\label{fig:ncrossoct}
		\end{minipage}
	\end{tabular}
\end{figure}
Here, we put
\[
	u_1=e_a \frac{w_{j_1}}{w_{j_2}},\quad u_2=e_a^{-1}\frac{w_{j_3}}{w_{j_4}},\quad u_3=\frac{w_{j_2}w_{j_4}}{w_{j_1}w_{j_3}},\quad u_4=e_b^{-1}\frac{w_{j_1}}{w_{j_4}},\quad u_5=e_b \frac{w_{j_3}}{w_{j_2}},
\]
\[
	v_1=e_a^{-1}\frac{w_{j_4}}{w_{j_1}},\quad v_2=e_a \frac{w_{j_2}}{w_{j_3}},\quad v_3=\frac{w_{j_1}w_{j_3}}{w_{j_2}w_{j_4}},\quad v_4=e_b \frac{w_{j_2}}{w_{j_1}},\quad v_5=e_b^{-1}\frac{w_{j_4}}{w_{j_3}} %"." is deleted
\]
in Figures \ref{fig:pcrossoct} and \ref{fig:ncrossoct}.
Furthermore, for a comlex number $z$, denote %$z'$, and $z''$ is removed
\[
	z' = \frac{1}{1-z}\quad \text{and} \quad z''=1-\frac{1}{z}.
\]
Note that if there exists a nonalternating part, the ideal tetrahedron abuts the one with the inverse complex number labeled.
Thus we can ignore the contribution of such a part.
Let $G_i$ be a product of the parameters of ideal tetrahedra around the region $R_i$ corresponding to the parameter $w_i$.
Then, we have
\[
	w_i \frac{\partial \Phi _D}{\partial w_i} = \frac{\pi \iu}{2}r(a_1,\ldots,a_n) + \log G_i,
\]
where $\pi \iu r(a_1,\ldots,a_n)/2$ is the summation of first terms of $w_{i}\partial\Phi_c^\pm/\partial w_{i}$
with $c$ running over all crossings around $R_i$. %"where $c$ runs"->"with c runnung"
However, this is equal to $0$ because the contribution of each parameter $a$ to $r(a_1,\ldots,a_n)$ is canceled
as in Figures \ref{fig:crosspm} and \ref{fig:colcontrib}.
\begin{figure}[htb]
	\centering
		\begin{minipage}{0.45\hsize}
		\captionsetup{width=5.5cm}
		\centering
		\includegraphics[height=3.0cm]{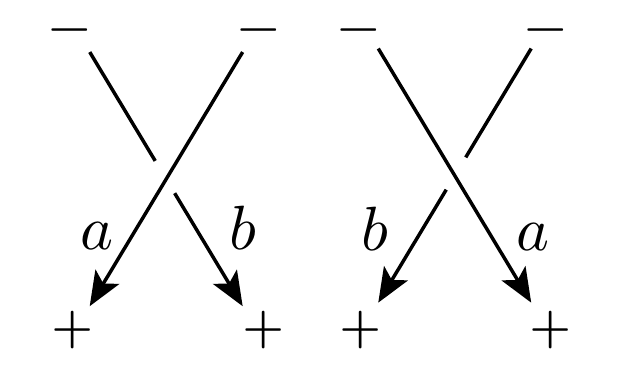}
		\caption{Signatures of parameters corresponding to edges. Note that the pattern of signatures is independent of the signature of a crossing.}
		\label{fig:crosspm}
		\end{minipage}
		\begin{minipage}{0.45\hsize}
		\captionsetup{width=5.5cm}
		\centering
		\includegraphics[height=3.0cm]{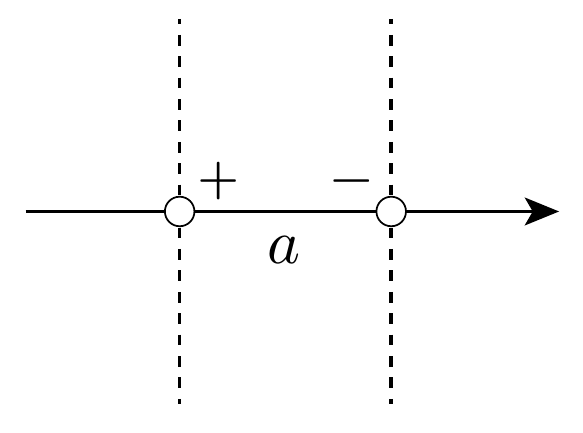}
		\caption{Contributions of each parameter. White circles represent either positive or negative crossings.}
		\label{fig:colcontrib}
		\end{minipage}
\end{figure}
Therefore, the equations
\begin{equation} \label{eq:lsaddle}
	\exp \left( w_i \frac{\partial \Phi _D}{\partial w_i} \right)= 1, \quad i=1,2,\ldots,\nu
\end{equation}
coincide with the gluing condition of the ideal tetrahedra.
Hence, we can obtain a hyperbolic structure from a saddle point $(\sigma_1(\boldsymbol{a}),\ldots,\sigma_\nu(\boldsymbol{a}))$ of $\Phi_D(\boldsymbol{a},-)$,
where $\boldsymbol{a}=\left(a_1,\ldots,a_n\right)$.
In addition, this hyperbolic structure is not complete in general because the dilation component of the meridian of the link component
with the color $a$ is equal to $e_a^{-2}$ (see Figure \ref{fig:cdmerid}).
	\begin{figure}[htb]
		\centering
		\includegraphics[scale=0.7]{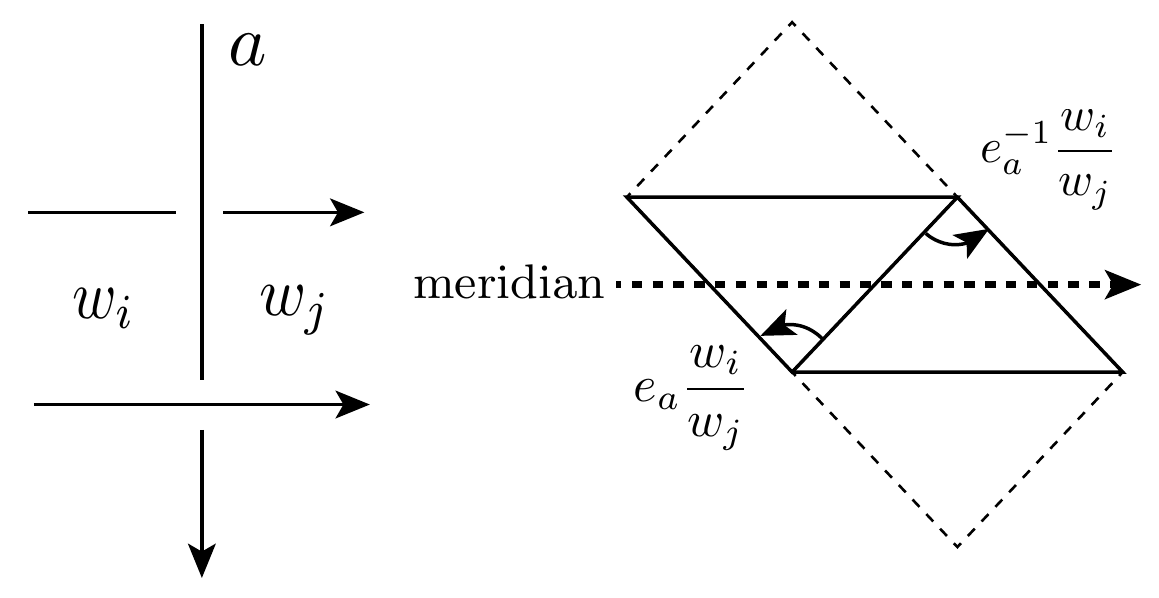}
		\caption{The dilation component of the meridian of the link component with the parameter $a$.}
		\label{fig:cdmerid}
	\end{figure}
Note that $\boldsymbol{a}=(1,\ldots,1)$ is the case of the original volume conjecture.
So we suppose that $(\sigma_1(\boldsymbol{a}),\ldots,\sigma_\nu(\boldsymbol{a}))$ gives $S^3 \setminus L$ the hyperbolic structure with the finite volume $\vol(S^3 \setminus L)$
when $\boldsymbol{a}=(1,\ldots,1)$ \cite{Ch2}.
Let $M_{a_1,\ldots,a_n}$ be a manifold with the hyperbolic structure given by $(\sigma_1(\boldsymbol{a}),\ldots,\sigma_\nu(\boldsymbol{a}))$.
We will determine the detail of this non-complete hyperbolic manifold $M_{a_1,\ldots,a_n}$.
Let $a$ be a real number slightly less than $1$. Note that the action derived from each meridian does not change a length because $|e_a^{-2}|=1$.
Therefore, the action derived from each longitude changes a length, since otherwise, both meridians and longitudes do not change a length % since otherwise
and this results in the complete hyperbolic structure \cite{BP}.
Therefore, the developing image in the upper half-space $\mathbb{H}^3$ of the link complement around the edge corresponding to parameter $a$
should be as shown in Figure \ref{fig:develop}.
	\begin{figure}[htb]
		\centering
		\includegraphics[scale=0.7]{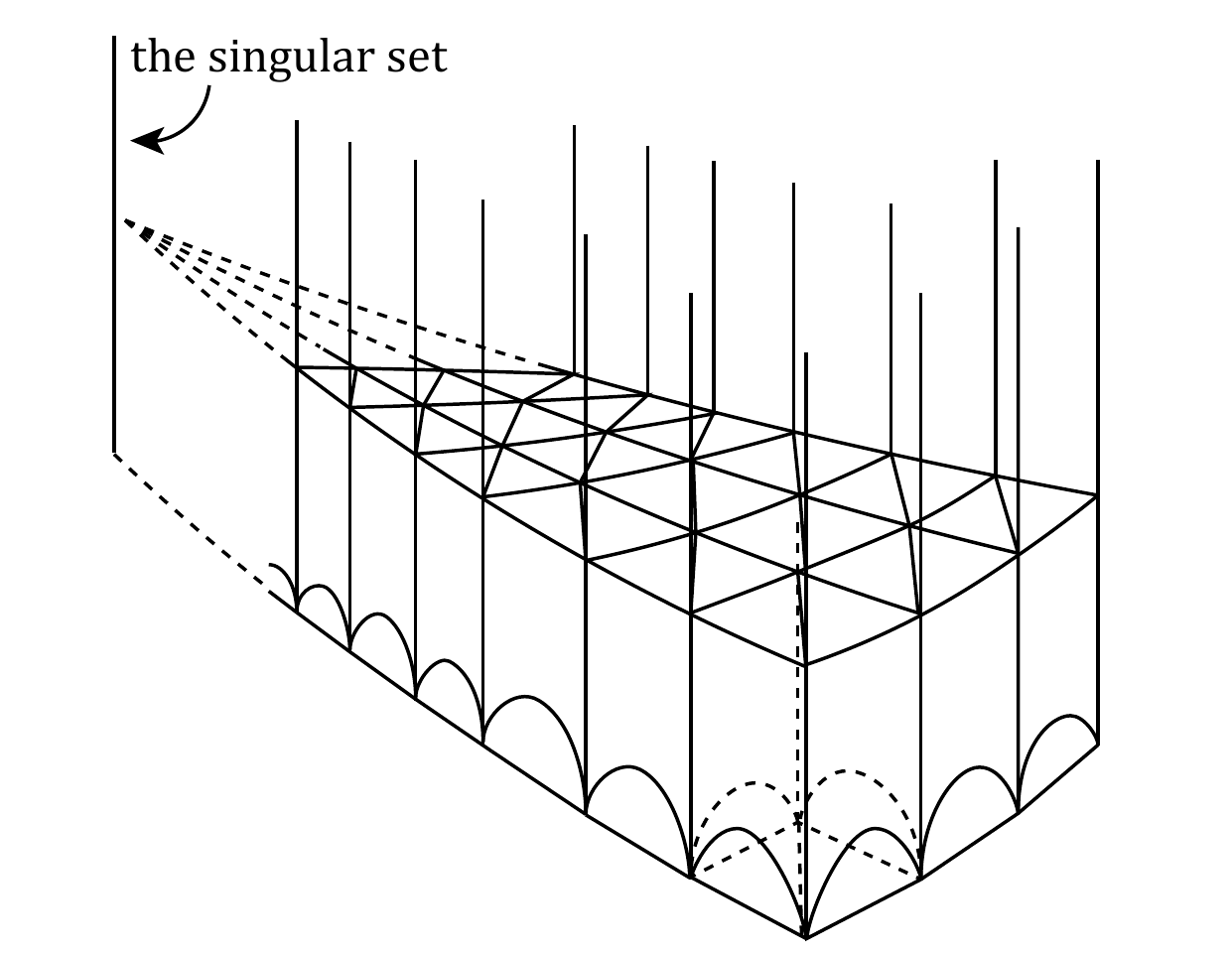}
		\caption{The developing image of the link complement with the non-complete hyperbolic structure.}
		\label{fig:develop}
	\end{figure}
If we glue faces by the action of meridians in Figure \ref{fig:develop}, each face is glued with the face rotated $2\pi(1-a)$ around the singular set.
Therefore, $M_{a_1,\ldots,a_n}$ is a cone-manifold of $L$ with cone-angle $2\pi(1-a_i)$ around the component corresponding to $a_i$.
Specifically, we can prove the following proposition:
\begin{thm} \label{thm:m1}
	The hyperbolic volume of the cone-manifold $M_{a_1,\ldots,a_n}$ is equal to the imaginary part\footnote{In \cite{Mh}, this value is called the optimistic limit.} of a function
	\[
		\tilde{\Phi}_D = \Phi _D - \sum^{\nu}_{j=1}w_j \frac{\partial \Phi _D}{\partial w_j} \log w_j
	\]
	evaluated at $w_j = \sigma _j(\boldsymbol{a})$, with $j=1,\ldots,\nu $.
\end{thm}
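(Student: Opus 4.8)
\emph{Proof proposal.} The plan is to reduce the theorem to the algebraic identity
\[
	\im \tilde{\Phi}_D(\boldsymbol{a},\sigma) = \sum_k D(z_k), \qquad D(z) = \im \li(z) + \arg(1-z)\log|z|,
\]
where $\sigma = (\sigma_1(\boldsymbol{a}),\ldots,\sigma_\nu(\boldsymbol{a}))$, the function $D$ is the Bloch--Wigner dilogarithm, and $z_k$ ranges over the shape parameters $u_i,v_i$ of the ideal tetrahedra sitting over the crossings. Granting this identity the theorem is immediate: the discussion preceding the statement has already identified the saddle point equations \eqref{eq:lsaddle} with the edge-gluing conditions of Thurston's triangulation and has realized $M_{a_1,\ldots,a_n}$ as the union of these tetrahedra (Figures \ref{fig:pcrossoct}, \ref{fig:ncrossoct}), so that $\sum_k D(z_k)$ is exactly the sum of their hyperbolic volumes, namely $\vol(M_{a_1,\ldots,a_n})$.

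To prove the identity I would rewrite $\tilde{\Phi}_D$ tetrahedron by tetrahedron. Each dilogarithm in $\Phi_D$ has as its argument a Laurent monomial $u = c\prod_j w_j^{n_j}$ in the region variables, with $c$ a product of the factors $e_a^{\pm 1}$; consequently $w_j\,\partial_{w_j}\li(u) = -n_j\log(1-u)$ and $\sum_j n_j \log w_j = \log u - \log c$. Substituting these into the definition of $\tilde{\Phi}_D$ turns the dilogarithmic part attached to each tetrahedron into $\li(u) + \log(u)\log(1-u)$, up to a term $-\log(c)\log(1-u)$ and up to the pieces produced from the quadratic and linear part $Q$ of $\Phi_D$. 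Writing $Q = Q_2 + Q_1 + Q_0$ for its quadratic, linear, and constant parts in the variables $\log w_j$, Euler's identity shows that the Legendre-type subtraction $\Phi_D \mapsto \Phi_D - \sum_j (w_j\partial_{w_j}\Phi_D)\log w_j$ replaces $Q$ by $-Q_2 + Q_0$; in particular it annihilates the linear $\pi\iu$-terms $Q_1$, while $Q_0$ is real and so does not contribute to the imaginary part. Using the elementary identity $\im[\li(u) + \log(u)\log(1-u)] = D(u) + \arg(u)\log|1-u|$ then produces one Bloch--Wigner dilogarithm per tetrahedron, together with a remainder consisting of the terms $\arg(u)\log|1-u|$, the $\log(c)$ contributions, and $\im(-Q_2)$.

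It remains to show that this remainder vanishes at the saddle point. First I would combine, crossing by crossing, the five dilogarithms through the inversion and reflection formulas for $\li$, so that they appear with the orientations of the tetrahedra in Figures \ref{fig:pcrossoct} and \ref{fig:ncrossoct}; the quadratic logarithms generated by these functional equations are then to be matched against $-Q_2$ and the $\log(c)$ terms. I would next reorganize the whole remainder by edges rather than by crossings and invoke the gluing relations holding at $\sigma$: around each edge the product of the incident shape parameters is $1$, so the sum of their logarithms lies in $2\pi\iu\,\mathbb{Z}$, and the argument-and-logarithm remainders telescope. The hard part is exactly this bookkeeping --- tracking the constants $e_a, e_b$ and the products $\log(w_{j_2}/w_{j_1})\log(w_{j_3}/w_{j_2})$ through the functional equations, reassembling them along the edges, and checking that the imaginary part of the non-dilogarithmic terms supplies precisely the corrections $\arg(1-z_k)\log|z_k|$ needed to upgrade each $\im\li(z_k)$ into $D(z_k)$. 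Once this cancellation is confirmed, the identity $\im\tilde{\Phi}_D(\boldsymbol{a},\sigma) = \sum_k D(z_k) = \vol(M_{a_1,\ldots,a_n})$ follows, which is the assertion of the theorem.
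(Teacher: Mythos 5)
Your proposal is correct and follows essentially the same route as the paper's proof: both expand $\tilde{\Phi}_D$ over the five ideal tetrahedra at each crossing, convert $\im\li$ into the Bloch--Wigner function $V(z)=\im\li(z)+\log|z|\arg(1-z)$, and observe that the leftover logarithmic terms die at the saddle point because $w_j\partial\Phi_D/\partial w_j\in 2\pi\iu\,\mathbb{Z}$ there. The only difference is bookkeeping order: where you perform the Legendre-type subtraction first and defer the cancellation to an edge-by-edge telescoping, the paper computes $\im\Phi_c^{\pm}-V_c^{\pm}(a,b)=\sum_i A^{\pm}_{j_i}\log|w_{j_i}|$ crossing by crossing and notes that the coefficients $A^{\pm}_{j_i}$ are exactly the imaginary parts of the derivatives \eqref{eq:pglu}, \eqref{eq:nglu}, so that the subtracted term $\sum_j w_j(\partial\Phi_D/\partial w_j)\log w_j$ absorbs the remainder at $w_j=\sigma_j(\boldsymbol{a})$ --- which is precisely the cancellation your ``hard part'' asks for.
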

\begin{proof}
	The hyperbolic volume $V(z)$ of the ideal tetrahedron with modulus $z$ is given by the Bloch-Wigner function \cite{Za} % well-known formula -> Bloch-Wigner function
	\begin{equation} \label{eq:volz}
		V(z) = \im \li(z) + \log|z| \arg(1-z).
	\end{equation}
	We only consider the case where a crossing is between different components.
	Let $V^{\pm}_c(a,b)$ be the sum of hyperbolic volumes of five ideal tetrahedra at a positive or negative crossing $c$ respectively.
	By using \eqref{eq:volz}, we can show that
	\[
		\im \Phi^+_c - V^+_c(a,b) = A^+_{j_1} \log|w_{j_1}|+A^+_{j_2} \log|w_{j_2}|+A^+_{j_3} \log|w_{j_3}|+A^+_{j_4} \log|w_{j_4}|,
	\]
	where $A^+_{j_i}$, with $i=1,2,3,4$ are:
	\begin{align*}
		A^+_{j_1}&= \frac{\pi}{2}(a-b) + \arg \left(1-e_a\frac{w_{j_1}}{w_{j_2}}\right)^{-1}\left(1-e_b^{-1}\frac{w_{j_1}}{w_{j_4}}\right)^{-1}\left( 1-\frac{w_{j_1}w_{j_3}}{w_{j_2}w_{j_4}}\right),\\
		A^+_{j_2}&= \frac{\pi}{2}(a+b) + \arg \left(1-e_a^{-1}\frac{w_{j_2}}{w_{j_1}}\right)\left(1-e_b^{-1}\frac{w_{j_2}}{w_{j_3}}\right)\left( 1-\frac{w_{j_2}w_{j_4}}{w_{j_1}w_{j_3}}\right)^{-1},\\
		A^+_{j_3}&= \frac{\pi}{2}(-a+b)+ \arg \left(1-e_a^{-1}\frac{w_{j_3}}{w_{j_4}}\right)^{-1}\left(1-e_b\frac{w_{j_3}}{w_{j_2}}\right)^{-1}\left( 1-\frac{w_{j_1}w_{j_3}}{w_{j_2}w_{j_4}}\right),\\
		A^+_{j_4}&= -\frac{\pi}{2}(a+b)+ \arg \left(1-e_a\frac{w_{j_4}}{w_{j_3}}\right)\left(1-e_b \frac{w_{j_4}}{w_{j_1}}\right)\left( 1-\frac{w_{j_2}w_{j_4}}{w_{j_1}w_{j_3}}\right)^{-1}.
	\end{align*}
	Similarly, we can show that
	\[
		\im \Phi^-_c - V^-_c(a,b) = A^-_{j_1} \log|w_{j_1}|+A^-_{j_2} \log|w_{j_2}|+A^-_{j_3} \log|w_{j_3}|+A^-_{j_4} \log|w_{j_4}|,
	\]
	where $A^-_{j_i}\ (i=1,2,3,4)$ are
	\begin{align*}
		A^-_{j_1}&= \frac{\pi}{2}(-a+b)+ \arg \left(1-e_a \frac{w_{j_1}}{w_{j_4}}\right)\left(1-e_b^{-1}\frac{w_{j_1}}{w_{j_2}}\right)\left( 1-\frac{w_{j_1}w_{j_3}}{w_{j_2}w_{j_4}}\right)^{-1},\\
		A^-_{j_2}&= \frac{\pi}{2}(a+b) + \arg \left(1-e_a \frac{w_{j_2}}{w_{j_3}}\right)^{-1}\left(1-e_b \frac{w_{j_2}}{w_{j_1}}\right)^{-1}\left( 1-\frac{w_{j_2}w_{j_4}}{w_{j_1}w_{j_3}}\right),\\
		A^-_{j_3}&= \frac{\pi}{2}(a-b) + \arg \left(1-e_a^{-1}\frac{w_{j_3}}{w_{j_2}}\right)\left(1-e_b \frac{w_{j_3}}{w_{j_4}}\right)\left( 1-\frac{w_{j_1}w_{j_3}}{w_{j_2}w_{j_4}}\right)^{-1},\\
		A^-_{j_4}&= -\frac{\pi}{2}(a+b)+ \arg \left(1-e_a^{-1}\frac{w_{j_4}}{w_{j_1}}\right)^{-1}\left(1-e_b^{-1}\frac{w_{j_4}}{w_{j_3}}\right)^{-1}\left( 1-\frac{w_{j_2}w_{j_4}}{w_{j_1}w_{j_3}}\right).
	\end{align*}
	By summing up over all crossings, we verify the proposition. %"can" is removed
\end{proof}
\begin{ex}[figure-eight knot]
Let $ \theta $ be a real number in $[0,\pi/3]$. The volume $V(\theta)$ of the cone-manifold of the figure-eight knot with a cone-angle $ \theta $ is 
given by the formula \cite{Med,MR}
\[
	V(\theta) = \int^{\frac{2 \pi }{3}}_{\theta} \arccosh (1+ \cos \theta - \cos 2 \theta)d \theta.
\]
In this case, the cone-manifold admits a hyperbolic structure.
On the other hand, the colored Jones polynomial for the figure-eight knot is given by Habiro and Le's formula \cite{Ha}
\[
	J_N(4_1;t) = \frac{1}{\{N\}}\sum^{N-1}_{p=0}\frac{\{N+p\}!}{\{N-p-1\}!}.
\]
We assume that $a$ is in $(5/6,1)$ so that $0 < 2 \pi(1-a)<\pi/3$.
The potential function of $J_{a(N)}(4_1,\xi _N)$ is
\[
	\Phi(a,x) = -2 \pi \iu a \log x -\li(e_a^2x)+\li(e_a^2x^{-1})
\]
and the derivative of this function with respect to $x$ is 
\[
	\frac{\partial \Phi}{\partial x} = \frac{1}{x} \log(-x+e_a^2+e_a^{-2}-x^{-1}).
\]
As a solution of the equation $ \partial \Phi/\partial x =0$, we obtain
\[
	x_0(a) = \left( \cos 2 \pi a - \frac{1}{2}\right)- \sqrt{\left( \cos 2 \pi a - \frac{3}{2}\right)\left( \cos 2 \pi a + \frac{1}{2}\right)}.
\]
Since $5/6 < a < 1$, the absolute value of $x_0(a)$ is equal to $1$. So we put $x_0(a) = e^{\iu \varphi(a)}$, where $\varphi(a) \in (-\pi,\pi]$.
Then, the imaginary part of $\Phi(a,x_0(a))$ is
\[
	\im \Phi(a,x_0(a)) = -2 \Lambda \left( \pi a + \frac{\varphi(a)}{2}\right)+2 \Lambda \left( \pi a - \frac{\varphi(a)}{2}\right).
\]
We will show that $ \im \Phi(a,x_0(a))= V(2 \pi(1-a))$ as a function on the closed interval $[2/3,1]$.
If $a = 2/3$, they are both $0$. The derivative with respect to $a$ is
	\begin{align*}
		\frac{d \Phi(a,x_0(a))}{da} &= \frac{\partial \Phi}{\partial a}(a,x_0(a)) + \frac{\partial \Phi}{\partial x}(a,x_0(a))\frac{d x_0(a)}{da}\\
		&=2 \pi \iu \log \frac{1-e_a^2 x_0(a)}{x_0(a)-e_a^2} \\
		&=-2 \pi^2 + 2 \pi \iu \log \left( \frac{e_a^2 x_0(a)-1}{e_a^{-2}x_0(a)-1}e_a^{-2}\right).
	\end{align*}
Since $e^{\iu \theta}-1 = 2 \sin (\theta/2)e^{\iu(\pi+ \theta)/2}$, we obtain
	\[
		\frac{d \Phi(a,x_0(a))}{da} = -2 \pi^2 + 2 \pi \iu \log \frac{\sin \frac{\varphi(a)+2 \pi a}{2}}{\sin \frac{\varphi(a)-2 \pi a}{2}}.
	\]
Let $f(a)$ be the function inside the $\log$, then
	\[
		\cosh \log f(a) = \frac{\sin^2 \frac{\varphi(a)+2 \pi a}{2} + \sin^2 \frac{\varphi(a)-2 \pi a}{2}}{2\sin \frac{\varphi(a)+2 \pi a}{2}\sin \frac{\varphi(a)-2 \pi a}{2}}.
	\]
Note that the denominator of the right-hand side is $ \cos(2 \pi a) -\cos \varphi(a) = 1/2$. Then,
	\begin{align*}
		\cosh \log f(a) &= 2 \left( \sin^2 \frac{\varphi(a)+2 \pi a}{2} + \sin^2 \frac{\varphi(a)-2 \pi a}{2}\right) \\
		&=2- \cos(\varphi(a) + 2 \pi a) - \cos(\varphi(a) - 2 \pi a) \\
		&=2-2 \cos \varphi(a)\cos 2 \pi a \\
		&=1 + \cos 2 \pi a -\cos 4 \pi a.
	\end{align*}
Therefore, we obtain
\[
	\frac{d \Phi(a,x_0(a))}{da} = -2 \pi^2 + 2 \pi \iu \arccosh(1 + \cos 2 \pi a -\cos 4 \pi a)
\]
Clearly, the imaginary part of this function is $ 2 \pi \arccosh(1 + \cos 2 \pi a -\cos 4 \pi a)$ which is equal to $dV(2\pi(1-a))/da$.
This shows that $ V(2\pi(1-a)) = \im \Phi(a,x_0(a))$.
\end{ex}
\begin{rem}
	We can show the following statement by the same procedure that appeared in \cite{Mh2}\footnote{In \cite{Mh2}, the value substituted for $t$ is slightly changed from the $N$-th root of unity.}:
	Let $a \in (5/12,1/2)$ be the limit of $a(N)/N$, where $(N \to \infty)$. Then, the limit
		\[
			4 \pi \lim _{N \to \infty} \frac{\log | J_{a(N)}(4_1;\xi_N^2)|}{N}
		\]
		is equal to the volume of the cone-manifold of the figure-eight knot with a cone-angle $2 \pi -4 \pi a$,
		where $N$ runs over all odd integers.
\end{rem}
\begin{ex}[Borromean rings]
	Let $K_B$ be the Borromean rings, $K_B(\alpha,\beta,\gamma)$ be the cone manifold of $K_B$ with cone-angles $\alpha,\ \beta,\ \gamma $,
	and $\Delta(\alpha,\theta)=\Lambda(\alpha+\theta)-\Lambda(\alpha-\theta)$, where $\Lambda(x)$ is the Lobachevsky function.
	If $0 < \alpha,\beta,\gamma < \pi$, then $K_B(\alpha,\beta,\gamma)$ admits a hyperbolic structure, and its volume is given by
	\[
		\vol K_B(\alpha,\beta,\gamma) = 2 \left(\Delta \left(\frac{\alpha}{2},\theta \right)+\Delta \left(\frac{\beta}{2},\theta \right)+\Delta \left(\frac{\gamma}{2},\theta \right)-2 \Delta \left(\frac{\pi}{2},\theta \right)-\Delta \left(0,\theta \right)\right),
	\]
	where $ \theta \in (0,\pi/2)$ is defined by the following conditions \cite{Med}:
	\begin{align*}
		T=&\tan \theta,\ L= \tan \frac{\alpha}{2},\ M= \tan \frac{\beta}{2},\ N= \tan \frac{\gamma}{2},\\
		&T^4-(L^2+M^2+N^2+1)T^2 -L^2M^2N^2=0.
	\end{align*}
	We define the function $ \tilde{\Delta}(x,y,z,\theta)$ by 
	\[
		\tilde{\Delta}(x,y,z,\theta) = 2 \left(\Delta \left(x,\theta \right)+\Delta \left(y,\theta \right)+\Delta \left(z,\theta \right)-2 \Delta \left(\frac{\pi}{2},\theta \right)-\Delta \left(0,\theta \right)\right)
	\]
	for convenience.
	On the other hand, the colored Jones polynomial for $K_B$ is given by \cite{Ha}
	\[
		J_{(l,m,n)}(K_B;t) = \sum^{\min(l,m,n)-1}_{i=1}\frac{\{l+i\}!\{m+i\}!\{n+i\}!(\{i\}!)^2}{\{1\}\{l-i-1\}!\{m-i-1\}!\{n-i-1\}!(\{2i+1\}!)^2}.
	\]
	Let $a,\ b$,and $c$ be the limit of $l/N,\ m/N$, and $n/N$ respectively.
	The potential function $\Phi_{K_B}(x)$ of $J_{(l,m,n)}(K_B;\xi_N)$ is 
	\begin{align*}
		\Phi_{K_B}(a,b,c,x) &= -2 \pi \iu(a+b+c)\log x +\frac{3}{2}(\log x)^2 \\
		&-\li(e_a^2 x)-\li(e_b^2 x)-\li(e_c^2 x)-2\li(x)\\
		&+\li \left(\frac{e_a^2}{x} \right)+\li \left(\frac{e_b^2}{x} \right)+\li \left(\frac{e_c^2}{x}\right)+2\li(x^2).
	\end{align*}
	The derivative of $\Phi_{K_B}(x)$ with respect to $x$ is
	\[
		x \frac{\partial \Phi_{K_B}}{\partial x}=\log \left(e_a^{-2}e_b^{-2}e_c^{-2}F(a,x)F(b,x)F(c,x)\frac{x^3(1-x)^2}{(1-x^2)^4}\right),
	\]
	where
	\[
		F(a,x) = (1-e_a^2 x)\left(1-\frac{e_a^2}{x} \right).
	\]
	Under the substistution $x = e^{2 \pi \iu \zeta}$, we obtain
	\begin{align*}
		&\frac{1}{2 \pi \iu}\frac{\partial \Phi_{K_B}}{\partial \zeta}\\
		&=\log \frac{\sin \pi(\zeta+a)\sin \pi(\zeta-a)\sin \pi(\zeta+b)\sin \pi(\zeta-b)\sin \pi(\zeta+c)\sin \pi(\zeta-c)}{\sin^2 \pi \zeta \cos^4 \pi \zeta}\\
		&=\log \frac{\tan^2 \pi \zeta-A^2}{1+A^2}\frac{\tan^2 \pi \zeta-B^2}{1+B^2}\frac{\tan^2 \pi \zeta-C^2}{1+C^2}\frac{1}{\tan^2 \pi \zeta},
	\end{align*}
	where $A = \tan \pi(1-a),\ B = \tan \pi(1-b)$, and $C = \tan \pi(1-c)$. Therefore, if $\pm \tan \pi \zeta $ are solutions of the equation
	\[
		\frac{t^2-A^2}{1+A^2}\frac{t^2-B^2}{1+B^2}\frac{t^2-C^2}{1+C^2}\frac{1}{t^2} = 1,
	\]
	which is equivalent to the equation
	\[
		(t^2+1)(t^4 -(A^2+B^2+C^2+1)t^2 - A^2 B^2 C^2) = 0,
	\]
	then $x = e^{2 \pi \iu \zeta}$ is a saddle point of $\Phi_{K_B}(a,b,c,x)$. 
	By using the properties of the Lobachevsky function, such as
	\begin{align*}
		\li(e^{2 \iu \theta}) &= \frac{\pi^2}{6}-\theta(\pi-\theta)+2 \iu \Lambda(\theta), \\
		\Lambda(2\theta) &= 2 \Lambda(\theta)+2 \Lambda \left( \theta + \frac{\pi}{2}\right),
	\end{align*}
	we obtain
	\begin{align*}
		\im \Phi _{K_B}(a,b,c,e^{2 \pi \iu \zeta}) &= \tilde{\Delta}\left(\pi(1-a),\pi(1-b),\pi(1-c),\pi(1-\zeta)\right)\\
		&= \vol K_B(2 \pi(1-a),2 \pi(1-b),2 \pi(1-c)).
	\end{align*}
\end{ex}
\section{The completeness condition} \label{sec:compness}
In the previous section, we fixed $a_1,\ldots,a_n$. In this section, we regard them as variables and find a geometric meaning.
First, we consider the case where a crossing is between different components.
The derivatives of the potential function with respect to the parameters corresponding to the colors are
	\begin{align*}
		\frac{\partial \Phi^+_c}{\partial a} &=\frac{\pi \iu}{2} \log \left( 1- e_a \frac{w_{j_4}}{w_{j_3}}\right)\left( 1- e_a \frac{w_{j_1}}{w_{j_2}}\right)^{-1}\left( 1- e_a^{-1} \frac{w_{j_3}}{w_{j_4}}\right)\left( 1- e_a^{-1} \frac{w_{j_2}}{w_{j_1}}\right)^{-1},\\
		\frac{\partial \Phi^+_c}{\partial b} &=\frac{\pi \iu}{2} \log \left( 1- e_b \frac{w_{j_4}}{w_{j_1}}\right)\left( 1- e_b \frac{w_{j_3}}{w_{j_2}}\right)^{-1}\left( 1- e_b^{-1} \frac{w_{j_1}}{w_{j_4}}\right)\left( 1- e_b^{-1} \frac{w_{j_2}}{w_{j_3}}\right)^{-1},\\
		\frac{\partial \Phi^-_c}{\partial a} &=\frac{\pi \iu}{2} \log \left( 1- e_a \frac{w_{j_1}}{w_{j_4}}\right)\left( 1- e_a \frac{w_{j_2}}{w_{j_3}}\right)^{-1}\left( 1- e_a^{-1} \frac{w_{j_4}}{w_{j_1}}\right)\left( 1- e_a^{-1} \frac{w_{j_3}}{w_{j_2}}\right)^{-1},\\
		\frac{\partial \Phi^-_c}{\partial b} &=\frac{\pi \iu}{2} \log \left( 1- e_b \frac{w_{j_3}}{w_{j_4}}\right)\left( 1- e_b \frac{w_{j_2}}{w_{j_1}}\right)^{-1}\left( 1- e_b^{-1} \frac{w_{j_4}}{w_{j_3}}\right)\left( 1- e_b^{-1} \frac{w_{j_1}}{w_{j_2}}\right)^{-1}.
	\end{align*}
We can observe the correspondence between these derivatives and dilation components by cusp diagrams (Figure \ref{fig:cdl}).
	\begin{figure}[htb]
		\centering
		\begin{minipage}{0.45\hsize}
			\centering
			\includegraphics[height=4.5cm]{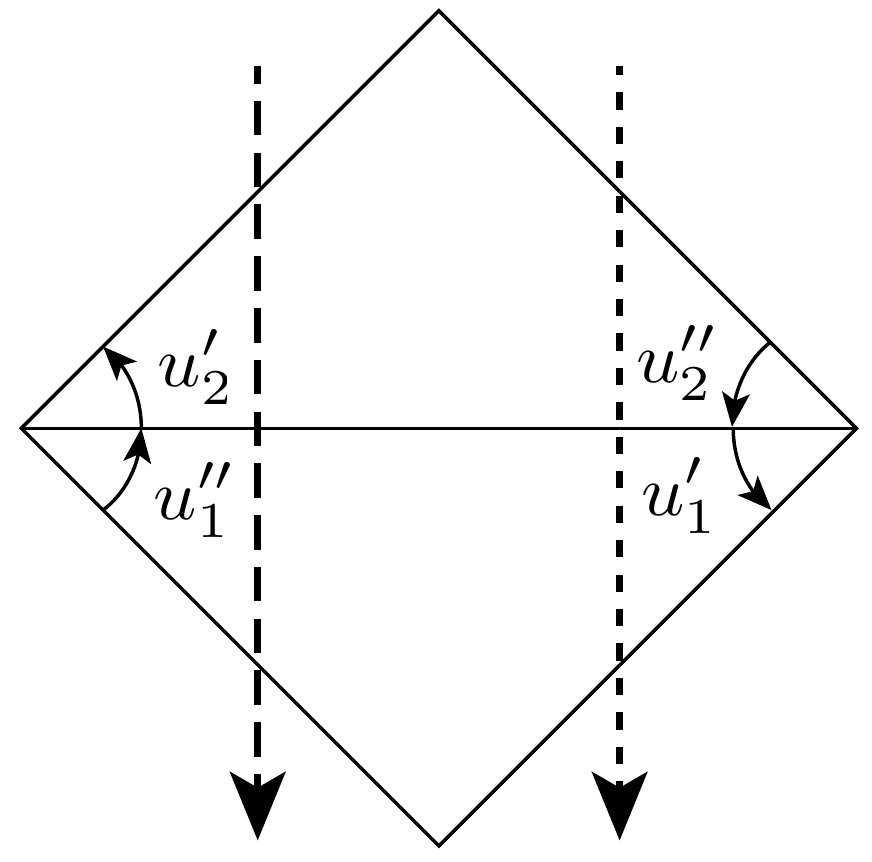}
			\subcaption{Upper side of a positive crossing.}
			\label{fig:cdpol}
		\end{minipage}
		\begin{minipage}{0.45\hsize}
			\centering
			\includegraphics[height=4.5cm]{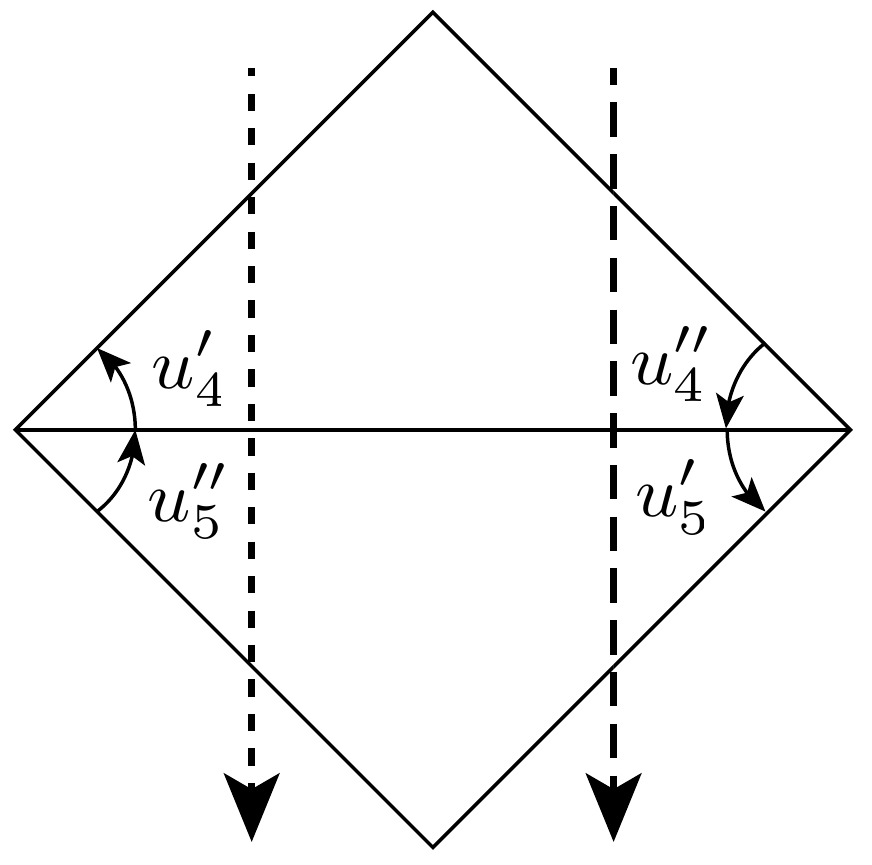}
			\subcaption{Lower side of a positive crossing.}
			\label{fig:cdpul}
		\end{minipage}\\
		\begin{minipage}{0.45\hsize}
			\centering
			\includegraphics[height=4.5cm]{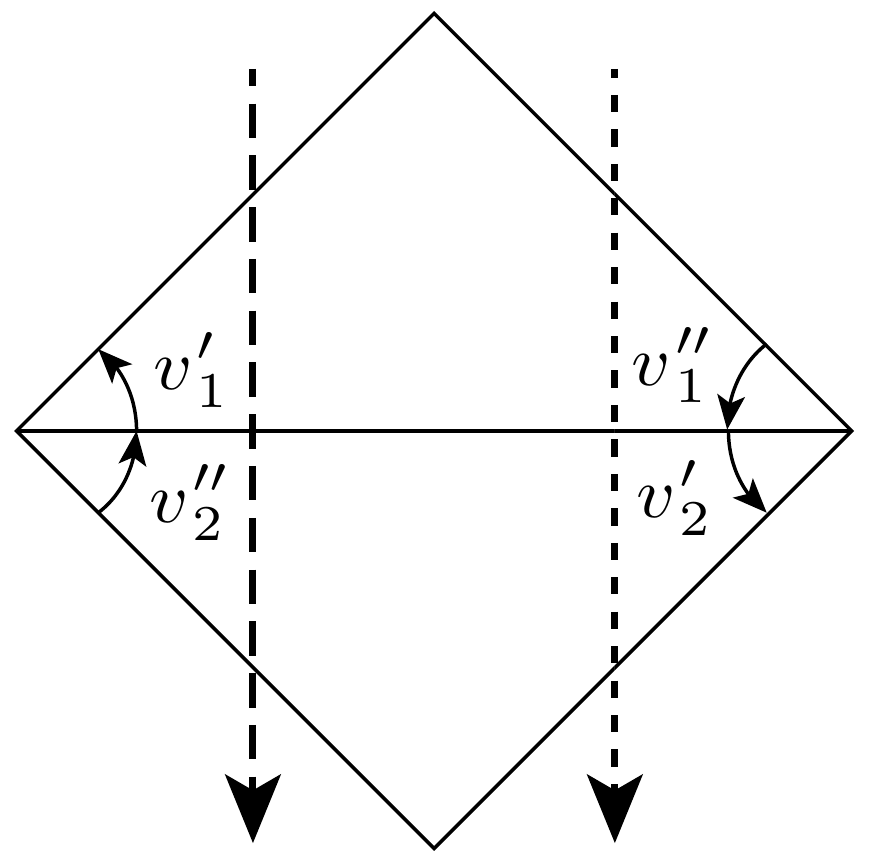}
			\subcaption{Upper side of a negative crossing.}
			\label{fig:cdnol}
		\end{minipage}
		\begin{minipage}{0.45\hsize}
			\centering
			\includegraphics[height=4.5cm]{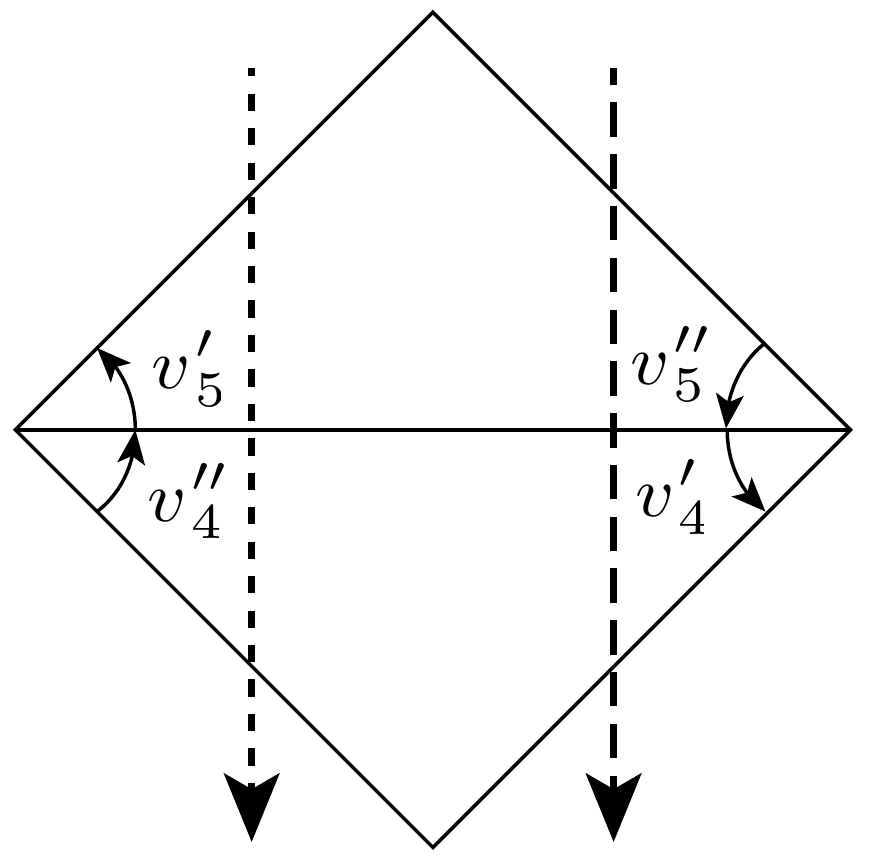}
			\subcaption{Lower side of a negative crossing.}
			\label{fig:cdnul}
		\end{minipage}
		\caption{Cusp diagrams.}
		\label{fig:cdl}
	\end{figure}
In Figure \ref{fig:cdl}, $\partial\Phi_c^+/\partial a$ corresponds to the upper side of a positive crossing (Figure \ref{fig:cdpol}),
$\partial\Phi_c^+/\partial b$ to the lower side of a positive crossing (Figure \ref{fig:cdpul}),
$\partial\Phi_c^-/\partial a$ to the upper side of a negative crossing (Figure \ref{fig:cdnol}),
and $\partial\Phi_c^-/\partial b$ to the upper side of a negative crossing (Figure \ref{fig:cdnul}).
A similar correspondence holds in the case where a crossing is between the same component.
Let $l_i$ be the longitude that is parallel to the component $L_i$, and let $\tilde{l}_i$ be the longitude of the component $L_i$ with $\lk(\tilde{l}_i,L_i)=0$.
For a curve $\gamma$ on the cusp diagram, we define $\delta(\gamma)$ as the dilation component of $\gamma$.
Then, by the above observation
\[
	\exp \left(\frac{1}{\pi \iu} \frac{\partial \Phi'_D}{\partial a_i}\right) = \exp \left(\frac{1}{2} \log \delta(l_i)^2 \right) = \delta(l_i),
\]
where $\Phi'_D$ is a potential function of the colored Jones polynomial without the modification for the Reidemeister move I.
Next, we consider the contribution of the modification.
For a positive crossing between the same component with a parameter $a$,
the modification corresponds to the addition of $(\pi \iu a)^2$, and its derivative is
\[
	\frac{1}{\pi \iu}\frac{d}{da}(\pi \iu a)^2 = 2 \pi \iu a = \log e_a^2.
\]
Here, $e_a^2$ is equal to the dilation component of the meridian with the inverse orientation.
Similarly, for a negative crossing, the derivative of $-(\pi \iu a)^2$ corresponds to the dilation component of the meridian.
Therefore,
\begin{equation}\label{eq:delaihol}
	\exp \left(\frac{1}{\pi \iu} \frac{\partial \Phi _D}{\partial a_i}\right) = \delta (\tilde{l}_i).
\end{equation}
\begin{rem}
	If $K$ is a knot, we have a more simple correspondence. The derivatives of $\Phi^{\pm}_c$ with respect to $a$ are % "a" is added
	\begin{align}
	\begin{split}\label{eq:delpak}
		\frac{1}{\pi \iu} &\frac{\partial \Phi^+_c}{\partial a}=\log e_a^2 \\
		&+ \log \left( 1- e_a^{-1} \frac{w_{j_3}}{w_{j_4}}\right)\left( 1- e_a \frac{w_{j_4}}{w_{j_1}}\right)\left( 1- e_a^{-1} \frac{w_{j_2}}{w_{j_1}}\right)^{-1}\left( 1- e_a \frac{w_{j_3}}{w_{j_2}}\right)^{-1},\\
	\end{split}
	\end{align}
	\begin{align}
	\begin{split}\label{eq:delnak}
		\frac{1}{\pi \iu} &\frac{\partial \Phi^-_c}{\partial a}=\log e_a^{-2}\\
		&+\log \left( 1- e_a^{-1} \frac{w_{j_4}}{w_{j_1}}\right)\left( 1- e_a \frac{w_{j_3}}{w_{j_4}}\right)\left( 1- e_a^{-1} \frac{w_{j_3}}{w_{j_2}}\right)^{-1}\left( 1- e_a \frac{w_{j_2}}{w_{j_1}}\right)^{-1}.
	\end{split}
	\end{align}
	The second term of equation \eqref{eq:delpak} corresponds to the upper side and the lower side of a positive crossing (Figures \ref{fig:cdpok} and \ref{fig:cdpuk}),
	and the second term of equation \eqref{eq:delnak} corresponds to the upper side and the lower side of a negative crossing (Figures \ref{fig:cdnok} and \ref{fig:cdnuk})
	\begin{figure}[htb]
		\centering
		\begin{minipage}{0.45\hsize}
			\centering
			\includegraphics[height=4.5cm]{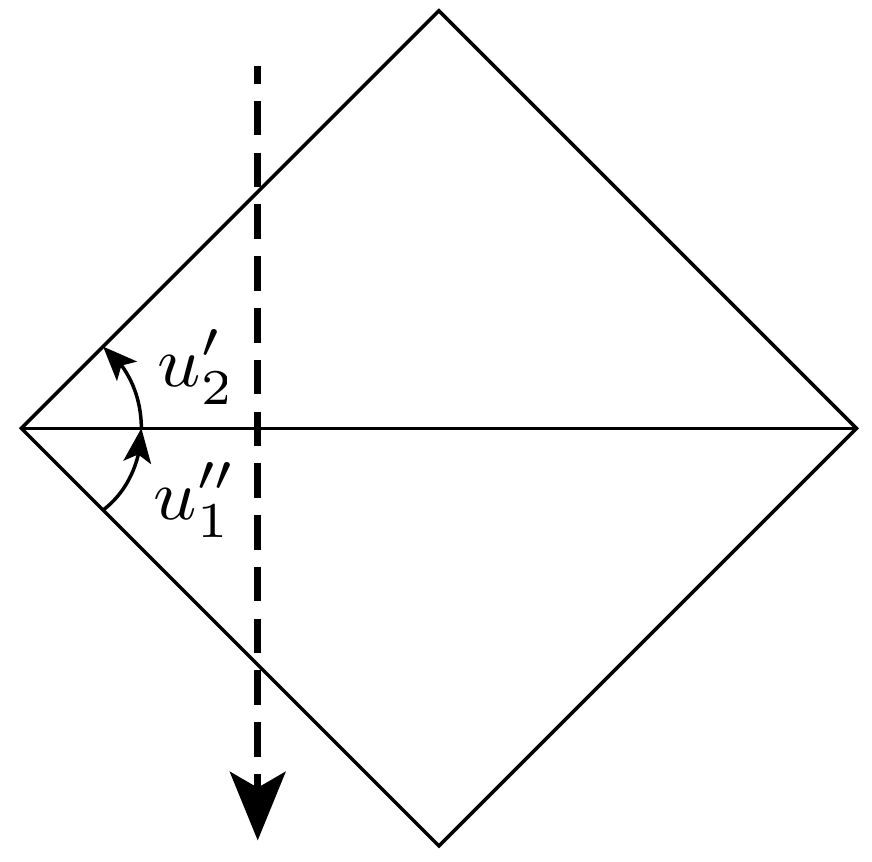}
			\subcaption{Upper side of a positive crossing.}
			\label{fig:cdpok}
		\end{minipage}
		\begin{minipage}{0.45\hsize}
			\centering
			\includegraphics[height=4.5cm]{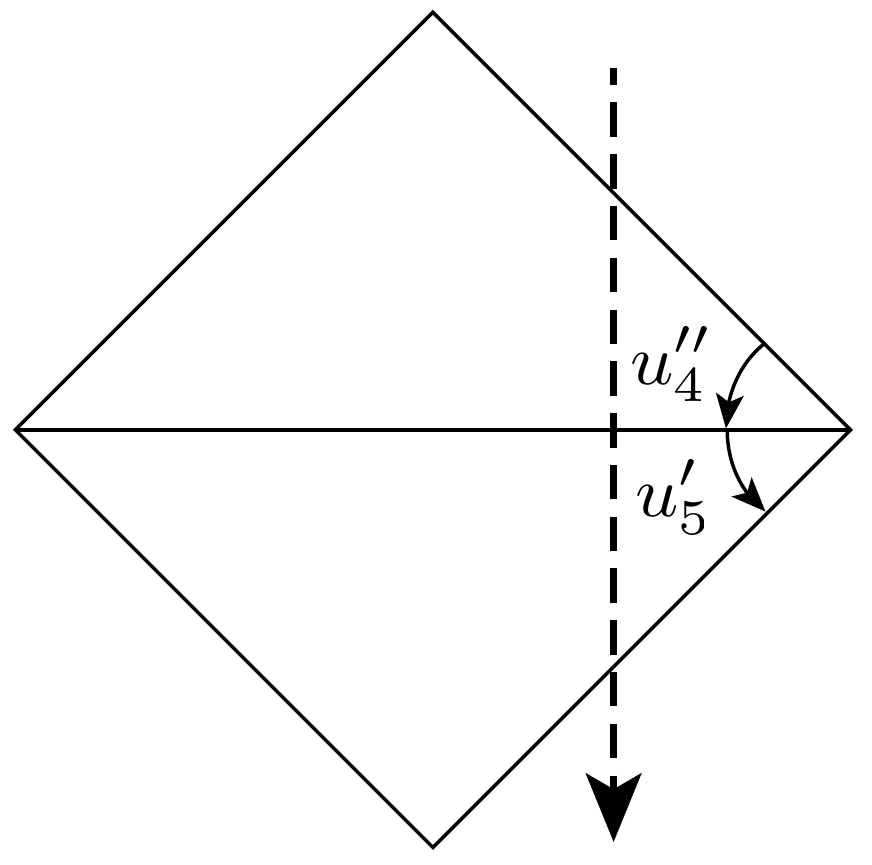}
			\subcaption{Lower side of a positive crossing.}
			\label{fig:cdpuk}
		\end{minipage}\\
		\begin{minipage}{0.45\hsize}
			\centering
			\includegraphics[height=4.5cm]{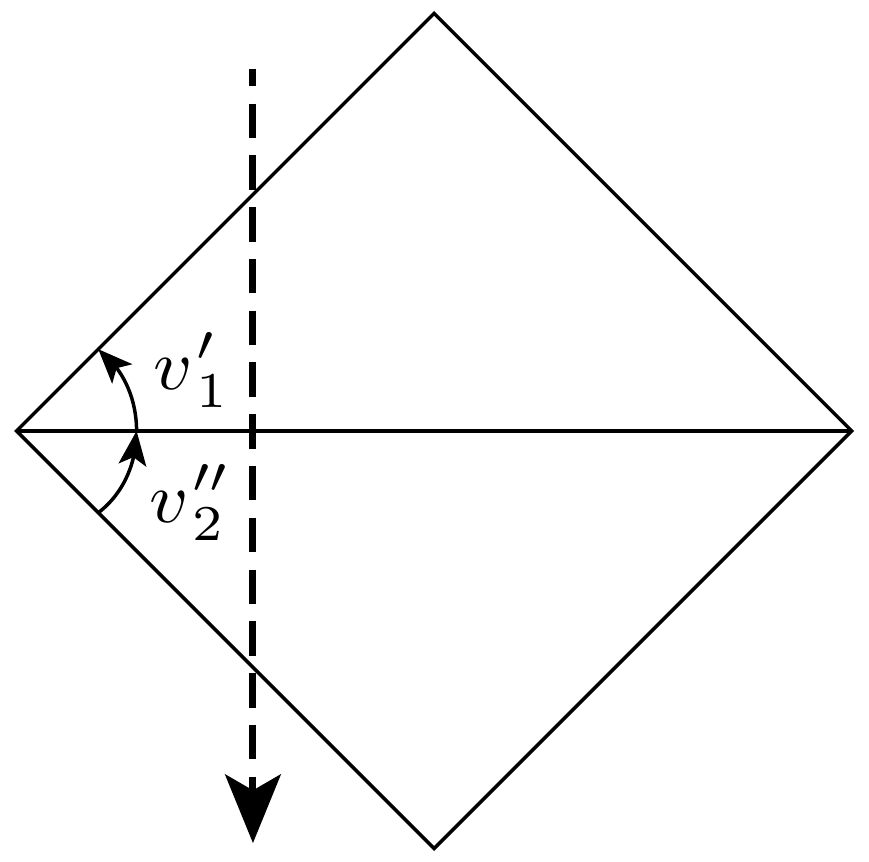}
			\subcaption{Upper side of a negative crossing.}
			\label{fig:cdnok}
		\end{minipage}
		\begin{minipage}{0.45\hsize}
			\centering
			\includegraphics[height=4.5cm]{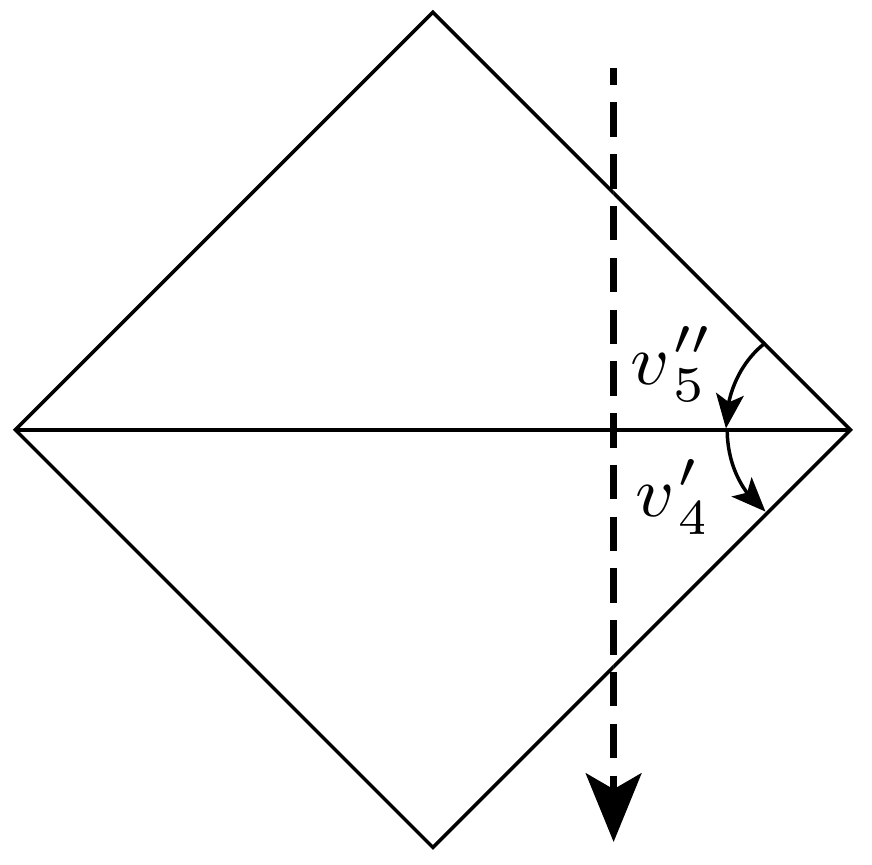}
			\subcaption{Lower side of a negative crossing.}
			\label{fig:cdnuk}
		\end{minipage}
		\caption{Cusp diagrams of a knot complement.}
	\end{figure}
\end{rem}
\begin{rem}
	Changing the variable $a_i$ to $u_i = 2\pi \iu a_i$, we have
	\[
		2 \frac{\partial \Phi _D}{\partial u_i} = \frac{1}{\pi \iu}\frac{\partial \Phi _D}{\partial a_i}.
	\]
	Then, 
	\[
		\Psi(\boldsymbol{u}) = 4 \left(\Phi _D(\boldsymbol{u},\sigma _1(\boldsymbol{u}),\ldots,\sigma _\nu(\boldsymbol{u})) - \Phi _D(\boldsymbol{0},\sigma _1(\boldsymbol{0}),\ldots,\sigma _\nu(\boldsymbol{0}))\right),
	\]
	where $ \boldsymbol{u}=(u_1,\ldots,u_n)$ and $\boldsymbol{0}=(0,\ldots,0)$ satisfy the conditions of the Neumann-Zagier potential function \cite{NZ}.
	Namely, $\Psi$ satisfies $\Psi(\boldsymbol{0})=0$ and
	\[
		\frac{1}{2} \frac{\partial \Psi}{\partial u_i} = \log \delta (\tilde{l}_i).
	\]
\end{rem}
If $a_i =1$, with $i=1,\ldots,n$, all the dilation components of meridians are $1$.
Furthermore, the contributions of parts, as shown in Figure \ref{fig:cdmerid} to the dilation component of the longitude is $1$ hence $\delta (\tilde{l}_i) =1,\ (1,\ldots,n)$. %\delta_i -> \delta(\tilde{l}_i)
Therefore, the point $(\boldsymbol{1},\sigma_1(\boldsymbol{1}),\ldots,\sigma_\nu (\boldsymbol{1}))$
gives a complete hyperbolic structure to the link complement \cite{BP}, where $\boldsymbol{1}=(1,\ldots,1)$.
Moreover, by the equations \eqref{eq:lsaddle} and \eqref{eq:delaihol} the point is a solution of the following system of equations:
\[
	\begin{dcases}
		\exp \left(w_i \frac{\partial \Phi _D}{\partial w_i} \right)= 1, &(i = 1,\ldots,\nu)\\
		\exp \left(\frac{1}{\pi \iu}\frac{\partial \Phi _D}{\partial a_j}\right) = 1, &(j = 1,\ldots,n)
	\end{dcases}
\]
Hence, we obtain the following theorem:
\begin{thm} \label{thm:m2}
	Let $D$ be a diagram of a hyperbolic link with $n$ components, and let $\boldsymbol{1}$ be $(1,\ldots,1) \in \mathbb{Z}^n$.
	The point $(\boldsymbol{1},\sigma_1(\boldsymbol{1}),\ldots,\sigma_\nu (\boldsymbol{1}))$ is a saddle point of the function
	$\Phi_D (a_1,\ldots,a_n,w_1,\ldots,w_\nu)$ and gives a complete hyperbolic structure to the link complement.
\end{thm}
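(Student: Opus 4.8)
The plan is to assemble the two ingredients already established in Sections \ref{sec:nchypstr} and \ref{sec:compness}: the gluing equations \eqref{eq:lsaddle} that characterize the $w_i$-critical points, and the holonomy identity \eqref{eq:delaihol} relating the $a_j$-derivatives to the longitude dilations. First I would recall that, by the very definition of $\sigma_j$, the tuple $(\sigma_1(\boldsymbol{1}),\ldots,\sigma_\nu(\boldsymbol{1}))$ is a saddle point of $\Phi_D(\boldsymbol{1},-)$, so the equations $\exp(w_i\,\partial\Phi_D/\partial w_i)=1$ hold automatically at the point. It therefore remains to verify the vanishing of the $a_j$-derivatives and then to identify the resulting structure as the complete one.

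Second, I would evaluate the cusp holonomy at $\boldsymbol{a}=\boldsymbol{1}$. The meridian of the $i$-th component contributes the dilation factor $e_{a_i}^{-2}=e^{-2\pi\iu a_i}$, which equals $1$ exactly when $a_i=1$; hence every meridian acts without dilation. For the longitude I would invoke \eqref{eq:delaihol}, which expresses $\exp\!\big((1/\pi\iu)\,\partial\Phi_D/\partial a_i\big)$ as the dilation component $\delta(\tilde{l}_i)$ of the linking-number-zero longitude. The task is then to show $\delta(\tilde{l}_i)=1$ at $\boldsymbol{a}=\boldsymbol{1}$.

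Third --- and this is where the genuine work lies --- I would track the contributions to $\delta(\tilde{l}_i)$ through the cusp diagrams of Figure \ref{fig:cdl} together with the Reidemeister-I modification terms visualized in Figure \ref{fig:cdmerid}. At $a_i=1$ the factors $e_{a_i}^{\pm1}=-1$ simplify the argument of each $\log$ appearing in $\partial\Phi_c^{\pm}/\partial a_i$, while the modification term $\log e_{a_i}^2$ degenerates to $\log 1=0$. The aim is to see that the product of dilation factors encountered along $\tilde{l}_i$ telescopes to $1$. The delicate point is the branch bookkeeping: one must confirm that the contributions attached to the meridian-type regions cancel rather than accumulate a nontrivial integer multiple of $2\pi\iu$, so that the holonomy is genuinely trivial and not merely trivial modulo the periodicity of the logarithm.

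Finally, I would apply Thurston's completeness criterion (as in \cite{BP}): a hyperbolic structure assembled from an ideal triangulation whose shapes satisfy the gluing equations is complete precisely when the holonomy of each cusp torus is by parabolics, equivalently when both meridian and longitude have trivial dilation. Having established that all meridians and all longitudes have dilation $1$ at $\boldsymbol{a}=\boldsymbol{1}$, completeness follows. The saddle-point assertion is then immediate: \eqref{eq:lsaddle} supplies the $w_i$-equations and \eqref{eq:delaihol} with $\delta(\tilde{l}_i)=1$ supplies the $a_j$-equations, so the full critical-point system is satisfied at $(\boldsymbol{1},\sigma_1(\boldsymbol{1}),\ldots,\sigma_\nu(\boldsymbol{1}))$. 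The main obstacle throughout is the third step, the careful verification that the longitude holonomy trivializes at $\boldsymbol{a}=\boldsymbol{1}$; once that is in hand, both the completeness conclusion and the saddle-point identification are formal consequences of the identities already derived.
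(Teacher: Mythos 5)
Your plan coincides with the paper's own argument: it assembles \eqref{eq:lsaddle} for the $w_i$-equations, the identity \eqref{eq:delaihol} for the $a_j$-equations, the observation that at $a_i=1$ the meridian dilation $e_{a_i}^{-2}$ and the Figure~\ref{fig:cdmerid} contributions become trivial so that $\delta(\tilde{l}_i)=1$, and the completeness criterion of \cite{BP}, which is exactly how the paper deduces Theorem~\ref{thm:m2}. If anything you are more careful than the paper, which reads off the triviality of the longitude dilation directly from the cusp diagrams without carrying out the telescoping and branch bookkeeping (contributions trivial on the nose rather than modulo $2\pi\iu$) that you correctly flag as the delicate step.
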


\section{The Witten-Reshetikhin-Turaev invariant} \label{sec:wrt}
In \cite{KM}, the Witten-Reshetikhin-Turaev invariant for the manifold obtained by Dehn surgery on a link is stated.
Furthermore, Murakami \cite{Mh} calculated the optimistic limit of the Witten-Reshetikhin-Turaev invariant
for the manifold obtained by integer surgery on the figure-eight knot.
By a similar argument as in Section \ref{sec:nchypstr}, we would be able to explain
the correspondence of the Witten-Reshetikhin-Turaev invariant and the geometry of the manifold obtained by Dehn surgery on a link.
The procedure might be as follows:
The Witten-Reshetikhin-Turaev invariant for the manifold $M_{f_1,\ldots,f_n}$ obtained by Dehn surgery on a link $L=L_1 \cup \cdots \cup L_n$
with a framing $f_i$ on $L_i$, where $i=1,\ldots,n$ can be written as a summation of the colored Jones polynomial $J_{\boldsymbol{k}}(L;\xi_N)$
multiplied by $t^{-\frac{1}{4}\sum f_j k_j^2}$, where $\boldsymbol{k}=(k_1,\ldots,k_n)$ are colors of $L$.
See \cite{KM} for details, but note that $t$ in \cite{KM} and $t$ in this paper are different.
We suppose that $M_{f_1,\ldots,f_n}$ admits a hyperbolic structure.
Let $ \alpha_i$ be $e^{\pi \iu a_i}$ and regard it as a complex parameter that is not necessarily in the unit circle. Then, we have
\[
	\frac{1}{\pi \iu} \frac{\partial \Phi}{\partial a_i} = \alpha _i \frac{\partial \Phi}{\partial \alpha _i}.
\]
Multiplying $t^{-\frac{1}{4}\sum f_j k_j^2}$ leads to the addition of $-\sum f_j (\log \alpha _j)^2$ to the potential function.
The derivative of it with respect to $\alpha _i$ is
\[
	\alpha_i \frac{\partial}{\partial \alpha_i} \left( -\sum^n_{j=1} f_j (\log \alpha_j)^2 \right)=-2 f_i \log \alpha_i = \log \alpha_i^{-2f_i}.
\]
Then, the saddle point equation is equivalent to the system of equations consisting of the gluing condition and
\[
	\delta(\tilde{l}_i) = \alpha _i^{2f_i},\quad (i=1,\ldots,n).
\]
Recall that the dilation component of the meridian $m_i$ of $L_i$ is $\alpha_i^{-2}$, which implies that $\delta(m_i)^{-f_i} = \delta(\tilde{l}_i)$.
If we suppose that $|\alpha_i|$ is less than $1$ and $f_i$ is a positive integer, the developing image would be as shown in Figure \ref{fig:wrtdev}.
By filling in the singular set, the developing image becomes complete.
	\begin{figure}[htb]
		\centering
		\includegraphics[scale=0.7]{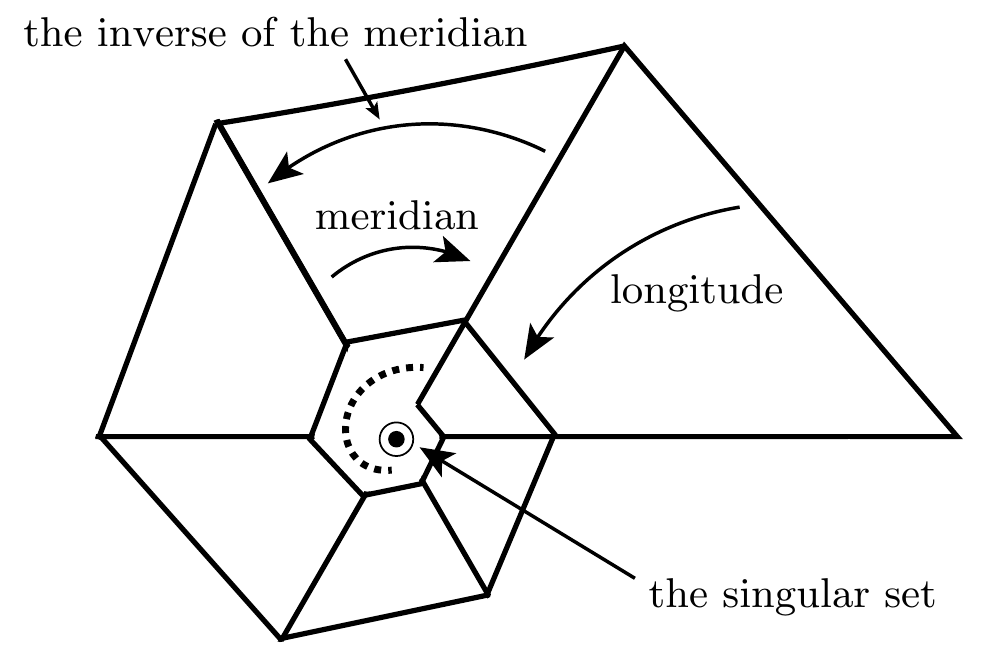}
		\caption{The schematic diagram of the developing image in the case of $f_i = 6$.}
		\label{fig:wrtdev}
	\end{figure}

\end{document}